\newcommand{\bDelta}{\pmb\Delta}
\newcommand{\simplex}[1]{{\Delta[#1]}}
\newcommand{\mn}[1]{\ar@{>->}[#1]}
\newcommand{\ep}[1]{\ar@{->>}[#1]}
\newcommand{\xto}[1]{\xrightarrow{#1}}
\newcommand{\bsmat}[1][(]{\left#1\begin{smallmatrix}}
\newcommand{\esmat}[1][)]{\end{smallmatrix}\right#1}
\renewcommand{\phi}{\varphi}
\newcommand{\mar}{\varsigma}
\newcommand{\tsc}[1]{\textsc{#1}}
\newcommand{\tee}{\mathfrak{t}}
\def\ter{\mathcal T\!er}
\newcommand{\iso}{\textsc{Eqv}}
\newcommand{\lemname}{Sator }
\newcommand{\po}{\ar@{}[dr]|(.7){\text{\pigpenfont R}}}
\newcommand{\pb}{\ar@{}[dr]|(.3){\text{\pigpenfont J}}}
\newcommand{\pp}{\ar@{}[dr]|{\text{\pigpenfont N}}}
\newcommand{\heart}{\heartsuit}
\newcommand{\var}[2]{
	\left[\begin{smallmatrix} #1 \\ 
	\downarrow \\ #2 
	\end{smallmatrix}\right]}
	\let\amalg=\undefined
	\let\coprod=\undefined
	\DeclareSymbolFont{cmsymbols}{OMS}{cmsy}{m}{n}
	\DeclareSymbolFont{cmlargesymbols}{OMX}{cmex}{m}{n}
	\DeclareMathSymbol{\amalg}{\mathbin}{cmsymbols}{"71}
	\DeclareMathSymbol{\coprod}{\mathop}{cmlargesymbols}{"60}
\newcommand{\refbf}[1]{\textbf{\ref{#1}}}
\DeclareMathOperator{\id}{id} 			
\DeclareMathOperator{\mrk}{Mrk}
\newcommand{\cate}[1]{\text{\fontseries{b}\selectfont{#1}}}
\newcommand{\D}{\cate{D}}
\newcommand{\T}{\mathfrak{T}}
\newcommand{\F}{\mathfrak{F}}
\newcommand{\fF}{\mathbb{F}}
\newcommand{\C}{\cate{C}}
\newcommand{\E}{\mathcal{E}}
\newcommand{\M}{\mathcal{M}}
\newcommand{\fs}{\textsc{fs}}
\long\def\symbolfootnote[#1]#2{\begingroup%
\def\thefootnote{\fnsymbol{footnote}}\footnote[#1]{#2}\endgroup}
\newtheoremstyle{reference}%
   {}                %
   {}                %
   {\slshape}              
   {}                      
   {\scshape}              
   {:}                     
   {.4em}                  
   {\thmname{#1}           
    \thmnumber{#2}         
    \thmnote{{\sc [#3]}}}  
\theoremstyle{definition}
	\newtheorem{theorem}{Theorem}[section]
	\newtheorem{lemma}[theorem]{Lemma}
	\newtheorem{proposition}[theorem]{Proposition}
	\newtheorem{exercise}[theorem]{Exercise}
	\newtheorem{remark}[theorem]{Remark}
	\newtheorem{definition}[theorem]{Definition}
	\newtheorem{notat}[theorem]{Notation}
	\newtheorem*{acknowledgements}{Acknowledgements}
\begin{document}
\title{$t$-structures are normal torsion theories}
	\author{Domenico Fiorenza${}^\dag$}
	\address{${}^\dag$Dipartimento di Matematica ``Guido Castelnuovo''\\
		Universit\`a degli Studi di Roma ``la Sapienza''\\
		P.le Aldo Moro 2 -- \oldstylenums{00185} -- Roma.}
		\email{fiorenza@mat.uniroma1.it}
 
	\author{Fosco Loregi\`an${}^\ddag$}
	\address{${}^\ddag$SISSA - Scuola Internazionale Superiore di Studi Avanzati\\ 
		via Bonomea 265\\
		\oldstylenums{34136} Trieste.}
		\email{floregi@sissa.it}
		\email{tetrapharmakon@gmail.com}

	\date{\today}

\begin{abstract}
We characterize $t$-structures in stable $\infty$-categories as suitable quasicategorical factorization systems.
More precisely we show that a $t$-structure $\tee$ on a stable $\infty$-category $\C$ is equivalent to a normal torsion theory $\mathbb{F}$ on $\C$, i.e. to a factorization system $\mathbb F=(\E,\M)$ where both classes satisfy the 3-for-2 cancellation property, and a certain compatibility with pullbacks/pushouts.
\end{abstract}

\subjclass[2010]{18E30, 18E35, 18A40.}
\keywords{
	stable $\infty$-category, 
	triangulated category, 
	$t$-structure, 
	quasicategory, 
	orthogonal factorization system, 
	torsion theory,
	stability conditions on triangulated categories.
}
\maketitle
\section{Introduction.}
The present paper aims to turn the widespread suggested connection between (torsion theories of a) \emph{reflective factorization systems} in category theory and \emph{$t$-structures} in algebraic geometry and stable homotopy theory into a precise theorem, showing how the link between these two notions exists as a genuine isomorphism (Thm. \refbf{thm:rosetta}) in the framework of stable $\infty$-categories.
 
The language used throughout the paper draws equally from (higher) category theory and homological algebra; because of its twofold nature, the ideal reader of this note is acquainted with the basic theory of both (orthogonal) factorization systems, here treated in their $\infty$-categorical counterparts presented in \cite{Joy} and \cite{HTT}, and $t$-structures in triangulated categories, for which the main references will be the classical text \cite{BBDPervers} and section \textbf{1.2} of Lurie's Higher Algebra, \cite{LurieHA}.

There is, of course, a vast literature exploring separately the classical notions of $t$-structure, torsion theory and factorization system, and yet a precise statement of (the 1-categorical counterpart of) our Thm. \refbf{thm:rosetta} seems to have eluded even comprehensive treatments like \cite{Beligiannisreiten} and \cite{CHK,RT}. Somehow mysteriously, \cite[\S \textbf{4}]{RT} seems to ignore the triangulated world, even if its authors point out clearly (see \cite[Remark \textbf{4.11.(2)}]{RT}) that
\begin{quote}
It [our definition of torsion theory, \emph{auth.}] applies, for example, to a triangulated category $\C$. Such a category has only weak kernels and weak cokernels and our definition precisely corresponds to torsion theories considered there as pairs $\F$ and $\T$ of colocalizing and localizing subcategories (see \cite{HPS}).
\end{quote}
Even more mysteriously, \cite[p. 17]{Beligiannisreiten} explicitly says that
\begin{quote}
Torsion pairs in triangulated categories are used in the literature mainly in the form of $t$-structures.
\end{quote}
and yet it avoids, in a certain sense, to offer a more primitive characterization for $t$-structures than the one given \emph{ibi}, Thm \textbf{2.13}. So, the starting point of this work can be summarized in the following question:
\begin{quote}
To which extent is it possible to prove the claim firmly suggested by the existing literature that ``$t$-structures are normal torsion theories''?
\end{quote}
Far from thinking that the authors of \cite{RT,CHK} has simply been blind to such a suggestive hypothesis, the authors believe that, since
the main result of the paper
 relies so heavily on properties only available in the stable world, there are not only conceptual, but also practical and computational reasons to adopt the setting of stable $\infty$-categories as a natural subsitute to the ``triangulated world'', and sheds a light on results otherwise unattainable or obscure.

\paragraph{\bf Organization of the paper.} Sections \textbf{2} and \textbf{3} contain introductory material about $\infty$-categorical factorization systems, stable $\infty$-categories and $t$-structures; even if a couple of results are of independent interest, they mainly serve to fix the notations we adopt in the following two sections. Section \textbf{4} is the heart of the paper, where we prove the promised characterization. The final section serves to introduce the reader to the stable $\infty$-categorical version of a few classical results in the theory of $t$-structures; a detailed discussion of Exercises 1 and 2 appears as the central result of \cite{heart}.

\paragraph{A glance to the existing literature.} There seem to be no (or better to say, too many) comprehensive references for the theory of factorization systems, since every author seems to rebuild the basic theory from scratch each time they prove a new result. Nevertheless, having to choose once and for all a reference for the interested reader, we couldn't help but mention the seminal paper by Freyd and Kelly \cite{FK}, the refined notion of ``algebraic'' factorization system proposed in Garner's \cite{Gar}, and Emily Riehl's thesis \cite{Riehl1}, whose first and second chapters, albeit being mainly interested in \emph{weak} factorization systems, constitute the best-approximation to a complete compendium about the basic theory, and finally the short, elementary note \cite{Riehl2}.

Moreover, we must mention the paper \cite{CHK} by Cassidy, H\'ebert, and Kelly, which together with \cite{RT} and the first section of \cite{Beligiannisreiten} constitute our main references for the connections between factorization systems and torsion theories in (pointed additive) categories. In particular, we point the interested reader to \cite{CHK} for a crystal-clear treatment of what we called ``fundamental connection'' in our Section \refbf{fundconn} and several adaptations of this notion in various particular contexts (pointed, well-complete and additive categories above all), and to \cite{Beligiannisreiten} for making clear that $t$-structures can be regarded as the triangulated counterpart of torsion theories in abelian categories.
\paragraph{\bf Notation and conventions.} 
Categories (and higher categories) are denoted as boldface letters $\C,\D$ etc. {Functors} between categories are always denoted as capital Latin letters like $F,G,H,K$ etc.; the category of functors $\C\to \D$ is denoted as $\text{Fun}(\C,\D)$, $\D^{\C}$, $[\C,\D]$ and suchlike; morphisms in $\text{Fun}(\C,\D)$ (i.e. natural transformations) are written in Greek alphabet. The simplex category $\bDelta$ is the \emph{topologist's delta}, having objects \emph{nonempty} finite ordinals $\simplex{n}:=\{0<1\dots<n\}$ regarded as categories in the obvious way.
We adopt \cite{HTT} as a reference for the language of quasicategories and simplicial sets; in particular, we treat ``quasicategory'' and ``$\infty$-category'' as synonyms.
\section{Quasicategorical factorization systems.}
\epigraph{[\dots] {\greektext ka`i st\'hsei t\`a m\`en pr\'obata >ek dexi\^wn a>uto\^u t\`a d\`e >er\'ifia >ex e>uwn\'umwn}.}{Matthew 25:33}
Recall that a \emph{marked simplicial set} $\underline X$ (\cite[Def. \textbf{3.1.0.1}]{HTT}) consists of a pair $(X, \mathcal S)$, where $X$ is a simplicial set, and $\mathcal S \subseteq X_1$ is a class of distinguished 1-simplices on $X$, which contains every degenerate 1-simplex.

The class of all marked simplicial sets is a category $\cate{sSet}^\mar$ in the obvious way, where a simplicial map $f\colon (X,\mathcal S_X)\to (Y, \mathcal S_Y)$ \emph{respects} the markings in the sense that $f \mathcal S_X\subseteq \mathcal S_Y$; the obvious forgetful functor
\[
U\colon \cate{sSet}^\mar \to \cate{sSet}
\]
admits both a right adjoint $X\mapsto X^\sharp =(X, X_1)$ and a left adjoint $X\mapsto X^\flat = (X, s_0(X_0))$, given by choosing the maximal and minimal markings, respectively (mnemonic trick: \textbf{r}ight adjoint is sha\textbf{r}p, \textbf{l}eft adjoint is f\textbf{l}at).
\begin{notat}
A \emph{marked quasicategory} simply consists of a marked simplicial set which, in addition, is a quasicategory. From now on, we will consider only marked quasicategories.
\end{notat}
\begin{definition}
Let $f,g$ be two edges in a quasicategory $\C$. We will say that $f$ is \emph{left orthogonal} to $g$ (or equivalently -in fact, dually- that $g$ is \emph{right orthogonal} to $f$) if in any commutative square $\simplex{1}\times\simplex{1}\to \C$ like the following,
\makeatother
\[
\xymatrix{
 \ar[r]\ar[d]_f & \ar[d]^g  \\
\ar[r] \ar@{.>}[ur]_a & 
}
\]
\makeatletter
the space of liftings $a$ rendering the two triangles (homotopy) commutative is contractible\footnote{By requiring that the space of liftings $\alpha$ is only \emph{nonempty} one obtains the notion of weak orthogonality. In the following discussion we will only cope with the stronger request.}.
\end{definition}
\begin{remark}
This is Definition \cite[\textbf{5.2.8.1}]{HTT}; compare also the older \cite[Def. \textbf{3.1}]{JanelidzeMarkl}.
\end{remark}
\begin{remark}
``Being orthogonal'' defines a binary relation between edges in a marked quasicategory denoted $f\perp g$.
\end{remark}
\begin{definition}
Let $(\C, \mathcal S)$ be a marked quasicategory; we define 
\begin{gather*}
\mathcal S^\perp = \{f\colon \simplex{1}\to \C\mid s \perp f, \; \forall s\in \mathcal S\} \\
{}^\perp\mathcal S = \{f\colon \simplex{1}\to \C\mid f \perp s, \; \forall s\in \mathcal S\}.
\end{gather*}
\end{definition}
\begin{definition}[Category of markings]
If $\C$ is a quasicategory we can define an obvious posetal category $\mrk(\C)$ whose objects are different markings of $\C$ and whose arrows are given by inclusions. The maximal and the minimal markings are, respectively, the terminal and initial object of $\mrk(\C)$; this category can also be characterized as the fiber over $\C$ of the forgetful functor $U\colon \cate{sSet}^\mar \to \cate{sSet}$. 
\end{definition}
The correspondence ${}^\perp(-)\dashv (-)^\perp$ forms a Galois connection in the category of markings of $X$; the maximal and minimal markings are sent one into the other under these correspondences.
\begin{definition}
A pair of markings $(\E,\M)$ in a quasicategory $\C$ is said to be a \emph{(quasicategorical) prefactorization} when $\E = {}^\perp\M$ and  $\M = \E^\perp$. In the following we will denote a prefactorization on $\C$ as $\fF=(\E,\M)$. The collection of all prefactorizations on a given quasicategory $\C$ forms a posetal class, which we will call $\tsc{pf}(\C)$, with respect to the order $\fF=(\E,\M)\preceq \fF'= (\E',\M')$ iff $\M\subset\M'$ (or equivalently, $\E'\subset \E$).
\end{definition}
\begin{remark}
It is evident (as an easy consequence of adjunction identities) that any marking $\mathcal S\in\text{Mrk}(\C)$ induces two \emph{canonical} prefactorization on $\C$, obtained sending $\mathcal S$ to $({}^\perp\mathcal S, ({}^\perp\mathcal S)^\perp)$ and $({}^\perp(\mathcal S^\perp),\mathcal S^\perp)$. These two prefactorizations are denoted $\mathbb S_\perp$ e ${}_\perp\mathbb S$, respectively. 
\end{remark}
\begin{definition}\label{df:rlgener}
If a prefactorization $\fF$ on $\C$ is such that there exists a marking $\mathcal S\in\mrk(\C)$ such that $\fF=\mathbb S_\perp$ (resp., $\fF={}_\perp\mathbb S$) then $\fF$ is said to be \emph{right} (resp., \emph{left}) \emph{generated} by $\mathcal S$.
\end{definition}
\begin{remark}
Since the orthogonal of a class $\mathcal S$ is uniquely determined, a prefactorization is characterized by any of the two markings $\E,\M$; the class of all prefactorizations $\fF=(\E,\M)$ on a quasicategory $X=\C$ is a complete lattice whose greatest and smallest elements are respectively 
\[
(\underline X^\sharp)_\perp  = (s_0(X_0), X_1)\;\text{ and }\; {}_\perp(\underline X^\sharp)= (X_1, s_0(X_0)).\]
\end{remark}
\begin{definition}[$\fF$-crumbled morphisms]\label{def:crumble}
Given a prefactorization $\fF\in\tsc{pf}(\C)$ we say that an arrow $f\colon X\to Y$ is \emph{$\fF$-crumbled}, (or \emph{$(\E,\M)$-crumbled} for $\fF=(\E,\M)$) when there exists a (necessarily unique) factorization for $f$ as a composition $m\circ e$, with $e\in\E$, $m\in\M$; let $\sigma_\fF$ be the class of all $\fF$-crumbled morphisms, and define
\[
\tsc{pf}_{\mathcal S}(\C) = \{\fF\mid \sigma_\fF\supset\mathcal S\}\subset \tsc{pf}(\C).
\] 
\end{definition}
\begin{definition}\label{def:effe-esse}
A prefactorization system $\fF=(\E,\M)$ in $\tsc{pf}(\C)$ is said to be a \emph{factorization system} on $\C$ if $\sigma_\fF=\text{Mor}(\C)$; factorization systems, identified with $\tsc{pf}_{\text{Mor}(\C)}(\C)$, form a sublattice $\tsc{fs}(\C)\leq \tsc{pf}(\C)$.
\end{definition}
This last definition (factorizations ``crumble everything'', i.e. split every arrow in two) justifies the form of a more intuitive presentation for a \emph{(quasicategorical) factorization system} on $\C$, modeled on the classical, 1-categorical definition:
\begin{definition}[Quasicategorical Factorization System]
Let $\C$ be a quasicategory; a \emph{factorization system} (\tsc{fs} for short) $\fF$ on $\C$ consists of a pair of markings $\mathcal E, \mathcal M\in\mrk(\C)$ such that
\begin{enumerate}
\item For every morphism $h\colon X\to Z$ in $\C$ we can find a factorization $X\xto{e} Y\xto{m} Z$, where $e\in\mathcal E$ and $m\in\mathcal M$; an evocative notation for this condition is $\C = \M\circ \E$;
\item $\E ={}^{\perp}\M$ and $\M = \E^{\perp}$.
\end{enumerate}
\end{definition}
\begin{remark}
The collection of all factorization systems on a quasicategory $\C$ form a posetal category $\tsc{fs}(\C)$ with respect to the relation induced by $\tsc{pf}(\C)$.
\end{remark}
\begin{remark}
In presence of condition (1) of Definition \refbf{def:effe-esse}, the second condition may be replaced by
\begin{enumerate}
\item [(2a)] $\E \perp \M$ (namely $\E\subset {}^\perp \M$ and $\M\subset \E^\perp$);
    \item [(2b)]  $\E$ and $\M$ are closed under
    isomorphisms in $\C^{\simplex{1}}$.
\end{enumerate}
(this is precisely \cite[Def. \textbf{5.2.8.8}]{HTT}).
\end{remark}
\begin{remark}
Condition (2) of the previous Definition (or the equivalent pair of conditions (2a), (2b)) entails that each of the two classes $(\E,\M)$ in a factorization system on $\C$ uniquely determines the other (compare the analogous statement about prefactorizations): this means that the obvious functor $\tsc{fs}(\C)\to \mrk(\C)\colon (\E,\M)\mapsto \E$ is in fact a (monotone) bijection of posetal classes. This is \cite[Remark \textbf{5.2.8.12}]{HTT}.
\end{remark}
\begin{definition}[Closure operators associated to markings]\label{def:satusatu}
Let $\C$ be a quasicategory. A marking $\mathcal J\in\mrk(\C)$ is called 
\begin{itemize}
\item[\textsf{W}.)] \emph{wide} if it contains all the isomorphisms and it is closed under composition;
\end{itemize}
A wide marking $\mathcal J$ (in a quasicategory $\C$ which admits in each case the co/limits needed to state the definition) is called
\begin{itemize}
\item[\textsf{P}.)] \emph{presaturated} if is closed under co-base change, i.e. whenever we are given arrows $j\in\mathcal J$, and $h$ such that we can form the pushout
\makeatother
\[
\xymatrix@R=7mm@C=7mm{
\ar[r]^h \ar[d]_j \po &  \ar[d]^{j'} \\
 \ar[r] & 
}
\]
\makeatletter
then the arrow $j'$ is in $\mathcal J$;
\item[\textsf{Q}.)] \emph{almost saturated} if it is presaturated and closed under retracts (in the category $\C^{\simplex{1}}$), i.e. whenever we are given a diagram like
\makeatother
\[
\xymatrix@R=7mm@C=7mm{
 \ar[r]^i\ar[d]_u & \ar[r]^r\ar[d]_v  &  \ar[d]^u \\
 \ar[r]_{i'} & \ar[r]_{r'} & 
}
\]
\makeatletter
where $ri=\id_A$ and $r'i'=\id_{C}$, if $v$ lies in $\mathcal J$, then the same is true for $u$;
\item[\textsf{C}.)] \emph{cellular} if it is presaturated and closed under \emph{transfinite composition}, namely whenever we have a cocontinuous functor $F\colon \alpha\to \mathcal J$ defined from any limit ordinal $\alpha$ admits a composite in $\mathcal J$, i.e. the canonical arrow
\makeatother
\[
\xymatrix@C=1mm{
F(0) \ar[r] & **[r] F(\alpha) = \varinjlim_{i<\alpha}F(i)
}
\]
\makeatletter
lies in $\mathcal J$;
\item[\textsf{S}.)] \emph{saturated} if it is almost saturated and cellular.
\end{itemize} 
All these conditions induce suitable closure operators, encoded as suitable (idempotent) monads on $\mrk(\C)$, defined for any property $P$ among $\{ \textsf{W}, \textsf{P}, \textsf{Q}, \textsf{C}, \textsf{S} \}$ as 
\[
(-)^P \colon \mrk(\C)\to \mrk(\C)\colon \mathcal S \mapsto \mathcal S^P = \bigcap_{\mathcal{U}\supseteq \mathcal{S}}\Big\{ \mathcal U\in \mrk(\C)\mid \mathcal U \text{ has property $P$} \Big\}\]
The \emph{cellularization} $(-)^\textsf{C}$ and the \emph{saturation} $(-)^\textsf{S}$ of a marking $\mathcal J$ on $\C$ are of particular interest (especially in homotopical algebra).
\begin{notat}
A little more generality is gained supposing that the cardinality of the coproducts or the transfinite compositions in $\C$ is bounded by some (regular) cardinal $\alpha$. In this case we speak of $\alpha$-saturated or $\alpha$-cellular classes, and define the closure operators of $\alpha$-\emph{cellularization} and $\alpha$-\emph{saturation}, etc.
\end{notat}
\end{definition}
The following Proposition is a standard result in the theory of factorization systems, which we will implicitly and explicitly need all along the paper; a proof for the 1-categorical version of the statement can be found in any of the provided references about factorization systems.
\begin{proposition}\label{satu}
Let $(\C,\mathcal S)$ be a marking of the cocomplete quasicategory $\C$; then the marking ${}^\perp\mathcal S$ of $\C$ is a saturated class. In particular, the left class of a weak factorization system in a cocomplete quasicategory is saturated.
\end{proposition}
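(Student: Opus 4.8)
The plan is to unpack the adjective \emph{saturated} of Definition~\ref{def:satusatu} into the bundle of closure properties it abbreviates --- containing all equivalences, stability under composition, under cobase change, under retracts in $\C^{\simplex{1}}$, and under transfinite composition --- and to verify each of these for the class $\L := {}^\perp\mathcal S$; since $\C$ is cocomplete, every pushout and every $\alpha$-indexed colimit occurring in these conditions exists, so each of them is a genuine requirement. The device making all five verifications uniform is the translation of orthogonality into the language of mapping spaces: for $f\colon A\to B$ and $g\colon X\to Y$ in $\C$, a homotopy-commutative square from $f$ to $g$ is precisely a point of $\C(A,X)\times_{\C(A,Y)}\C(B,Y)$, and the space of liftings of that square is the homotopy fibre over that point of the canonical comparison map
\[
\theta_{f,g}\colon\ \C(B,X)\ \longrightarrow\ \C(A,X)\times_{\C(A,Y)}\C(B,Y).
\]
Hence $f\perp g$ exactly when $\theta_{f,g}$ is a weak equivalence, that is, when the naturality square of the transformation $f^*\colon\C(B,-)\to\C(A,-)$ evaluated at $g$ is a homotopy pullback of spaces --- this is just the unwinding of the definition of orthogonality (see \cite[\S\,\textbf{5.2.8}]{HTT}), and may be quoted. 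In this reformulation, ``$f\in\L$'' reads ``$\theta_{f,s}$ is an equivalence for every $s\in\mathcal S$'', and each closure property of $\L$ becomes a standard stability statement about homotopy pullback squares.

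Concretely: if $f$ is an equivalence, both horizontal legs of the square of $\theta_{f,g}$ are equivalences, hence $\theta_{f,g}$ is an equivalence for every $g$, so all equivalences lie in $\L$. If $f\colon A\to B$ and $f'\colon B\to C$ lie in $\L$ and $s\in\mathcal S$, the square of $\theta_{f'f,s}$ is the horizontal pasting, along $\C(B,-)$, of the squares of $\theta_{f',s}$ and $\theta_{f,s}$; the pasting lemma for homotopy pullbacks gives that it is one too, so $f'f\in\L$ and $\L$ is wide. If $f'\colon A'\to B'$ is the cobase change of $f\in\L$ along $h\colon A\to A'$, then $B'\simeq A'\sqcup_A B$, whence $\C(B',-)\simeq\C(A',-)\times_{\C(A,-)}\C(B,-)$ naturally; pasting this pullback square with the square of $\theta_{f,s}$ (a pullback, as $f\in\L$) identifies $\theta_{f',s}$ with an equivalence for every $s\in\mathcal S$, so $f'\in\L$ and $\L$ is presaturated.

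If $u$ is a retract of $v\in\L$ in $\C^{\simplex{1}}$, as in Definition~\ref{def:satusatu}(\textsf{Q}), then applying $\C(-,X)$ and $\C(-,Y)$ to the retract diagram exhibits the square of $\theta_{u,s}$ as a retract, in squares of spaces, of the square of $\theta_{v,s}$; a retract of a homotopy pullback square is again one (its comparison map is a retract of the equivalence $\theta_{v,s}$, hence an equivalence), so $u\in\L$ and $\L$ is almost saturated. Finally, let $F\colon\alpha\to\L$ be cocontinuous; by transfinite induction one checks that the structural map $F(0)\to F(\beta)$ lies in $\L$ for every $\beta\le\alpha$ (with $F(\alpha):=\varinjlim_{\beta<\alpha}F(\beta)$ the transfinite composite): the base case is an identity, the successor case is closure under composition (using $F(\beta)\to F(\beta+1)\in\L$), and at a limit ordinal $\lambda\le\alpha$ one has $\C(F(\lambda),-)\simeq\varprojlim_{\beta<\lambda}\C(F(\beta),-)$, so the square of $\theta_{F(0)\to F(\lambda),\,s}$ is the limit of the tower of homotopy pullback squares $\theta_{F(0)\to F(\beta),\,s}$, and hence itself a homotopy pullback (a finite pullback commutes with this limit, and limits preserve equivalences). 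Thus $\L$ is cellular, and being also almost saturated, it is saturated.

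The one step that is not pure formalism --- and which I would write out carefully --- is the opening reduction itself: the identification of the (possibly large) space of liftings of a commutative square with the homotopy fibre of $\theta_{f,g}$, and the resulting equivalence between ``the space of liftings is contractible for every square'' and ``$\theta_{f,g}$ is a weak equivalence''; everything afterwards is bookkeeping with homotopy pullbacks in $\widehat{\cate{Space}}$. For the final assertion, the same retract-and-recursion arguments (carried out in the classical references on factorization systems) show that the left class of a weak factorization system $(\L,\R)$ on a cocomplete quasicategory is saturated as well; there the role of cocompleteness of $\C$ is only to guarantee that the pushouts and transfinite colimits appearing in the closure conditions exist, exactly as Definition~\ref{def:satusatu} already signals.
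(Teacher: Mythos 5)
Your proof is correct, but it is genuinely more self-contained than what the paper does: the paper offers no argument at all for Proposition \refbf{satu}, simply declaring it a standard result and pointing to the 1-categorical literature, with the $\infty$-categorical translation left as ``straightforward''. You instead carry out that translation explicitly, by re-encoding orthogonality as the statement that the comparison map $\theta_{f,g}\colon \C(B,X)\to \C(A,X)\times_{\C(A,Y)}\C(B,Y)$ is an equivalence (equivalently, that the naturality square of mapping spaces is a homotopy pullback), and then deriving each closure property of ${}^\perp\mathcal S$ from stability of homotopy pullbacks under pasting, base change along the equivalence $\C(A'\sqcup_A B,-)\simeq \C(A',-)\times_{\C(A,-)}\C(B,-)$, retracts, and limits of towers. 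This is exactly the standard mechanism (it is how the analogous statements in \cite[\S\,\textbf{5.2.8}]{HTT} are proved), and it buys a uniform, reference-free verification of all five conditions of Definition \refbf{def:satusatu}, at the price of having to justify carefully the identification of the lifting space with the homotopy fibre of $\theta_{f,g}$, which you rightly flag as the only non-formal step. One caveat worth making explicit: the final clause about the left class of a \emph{weak} factorization system does not follow from your main argument, since there the lifting spaces are only required to be nonempty, so $\theta_{f,s}$ need not be an equivalence; you correctly defer this case to the classical retract-and-transfinite-composition arguments (which do adapt, with $\theta$ only required to hit every component), and it would be worth saying in one line that this is a separate, weaker argument rather than a specialization of the orthogonality one.
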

Completely dual definitions give rise to co-$P$-classes\footnote{Obviously, wideness and closure under retracts are auto-dual properties.} again, suitable mo\-nads acting as co-$P$-closure operators are defined on $\mrk(\C)$, giving the dual of Proposition \refbf{satu}.
\begin{proposition}
Let $(\C,\mathcal S)$ be a marking of the cocomplete quasicategory $\C$; then the marking $\mathcal S^\perp$ of $\C$ is a co-saturated class. In particular, the right class of a weak factorization system in a complete category is co-saturated.
\end{proposition}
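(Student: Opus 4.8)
The plan is to derive this as the formal dual of Proposition \refbf{satu}, by transporting everything to the opposite quasicategory. Strictly speaking, for the limits implicit in the notion of a co-saturated class to be available one really wants $\C$ \emph{complete} rather than cocomplete; read this way the statement is exactly the mirror image of Proposition \refbf{satu}, and this is the only hypothesis that needs a cosmetic correction.

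First I would record that orthogonality is self-dual: for edges $f,g$ of a quasicategory $\C$ one has $f\perp g$ if and only if $g^\op\perp f^\op$ in $\C^\op$. Indeed, applying $(-)^\op$ to a commutative square with left edge $f$ and right edge $g$ produces a commutative square with left edge $g^\op$ and right edge $f^\op$, and the simplicial set of diagonal fillers is unchanged (only the directions of its edges are reversed), so it is contractible for the one square precisely when it is for the other. Writing $\mathcal S^\op=\{s^\op\mid s\in\mathcal S\}\in\mrk(\C^\op)$, this says that an edge $f$ of $\C$ lies in $\mathcal S^\perp$ if and only if $f^\op$ lies in ${}^\perp(\mathcal S^\op)$, i.e. $(\mathcal S^\perp)^\op={}^\perp(\mathcal S^\op)$ as markings of $\C^\op$.

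Since $\C$ is complete, $\C^\op$ is cocomplete, so Proposition \refbf{satu} applies to the marking $(\C^\op,\mathcal S^\op)$ and gives that ${}^\perp(\mathcal S^\op)$ is a saturated marking of $\C^\op$. It then remains to translate ``saturated in $\C^\op$'' into ``co-saturated in $\C$'', which is pure bookkeeping on Definition \refbf{def:satusatu}: a marking $\mathcal J$ of $\C^\op$ contains the isomorphisms, is closed under composition, cobase change, retracts and transfinite composition if and only if $\mathcal J^\op$ has the dual properties in $\C$ — it is wide, closed under \emph{base} change, retracts, and transfinite \emph{co}-composition — because a pushout square in $\C^\op$ is a pullback square in $\C$, an $\alpha$-indexed colimit tower in $\C^\op$ is an $\alpha$-indexed limit tower in $\C$, and (as in the footnote to Proposition \refbf{satu}) wideness and closure under retracts are auto-dual; the cardinality bounds are preserved verbatim, so $\alpha$-saturated dualizes to $\alpha$-co-saturated. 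Applying this to $\mathcal J={}^\perp(\mathcal S^\op)$ and combining with the previous step shows that $\mathcal S^\perp=\big({}^\perp(\mathcal S^\op)\big)^\op$ is a co-saturated marking of $\C$.

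The ``in particular'' clause is then immediate: the right class of a weak factorization system is of the form $\mathcal S^\perp$ for a suitable marking $\mathcal S$ (the left class, or any class generating it), hence co-saturated by the first half, exactly as in the corresponding rider to Proposition \refbf{satu}. I expect no genuine difficulty here; the single point to watch is the dictionary recording which of the closure conditions get interchanged by $(-)^\op$ and which are fixed, together with the harmless correction of the completeness hypothesis.
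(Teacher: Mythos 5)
Your proposal is correct and takes essentially the same route as the paper, which offers no separate argument here but simply presents this statement as the formal dual of Proposition \refbf{satu}; your explicit verification that orthogonality is self-dual under $(-)^{\mathrm{op}}$ and the bookkeeping translating saturation in $\C^{\mathrm{op}}$ into co-saturation in $\C$ is exactly the intended (implicit) proof. Your observation that the hypothesis should read \emph{complete} rather than cocomplete is also right, and is consistent with the paper's own ``in particular'' clause, which speaks of a complete category.
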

\begin{proposition}\label{thereiso}
Let $\C$ be a quasicategory and $\mathbb F =(\E,\M)\in \fs(\C)$; then $\E\cap \M$ equals the class of all equivalences in $\C$.
\end{proposition}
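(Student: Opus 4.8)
The plan is to prove the two inclusions (equivalences $\subseteq\E\cap\M$, and $\E\cap\M\subseteq$ equivalences) separately, using only two features of a factorization system that are already available: since $\E$ and $\M$ are \emph{markings} they contain every degenerate edge of $\C$, so in particular $\id_A\in\E\cap\M$ for every object $A$; and, being the two classes of a factorization system, $\E$ and $\M$ are each closed under isomorphisms in the arrow $\infty$-category $\C^{\simplex{1}}$ (condition (2b) of the Remark following Definition \refbf{def:effe-esse}; this is in any case immediate from $\E={}^\perp\M$, $\M=\E^\perp$, since the relation $f\perp g$ depends only on the equivalence classes of $f$ and $g$ in $\C^{\simplex{1}}$).

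For the inclusion of the equivalences into $\E\cap\M$, I would argue that every equivalence $w\colon A\to B$ is isomorphic to an identity in $\C^{\simplex{1}}$. Picking a homotopy inverse $g\colon B\to A$ of $w$, the commutative square
\[
\xymatrix{
A \ar[r]^{\id_A}\ar[d]_w & A\ar[d]^{\id_A}\\
B\ar[r]_g & A
}
\]
is a morphism $w\to\id_A$ in $\C^{\simplex{1}}$ whose two components $\id_A$ and $g$ are equivalences in $\C$; hence it is an equivalence in $\C^{\simplex{1}}$, i.e. $w\cong\id_A$ there. As $\id_A$ lies in both $\E$ and $\M$, and both classes are closed under such isomorphisms, $w\in\E\cap\M$.

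For the reverse inclusion, take $f\colon X\to Y$ in $\E\cap\M$. Since $f\in\E={}^\perp\M$ while simultaneously $f\in\M$, we have $f\perp f$; applying this to the evidently commutative square with both vertical edges equal to $f$ and both horizontal edges identities produces a diagonal filler $a\colon Y\to X$ (the space of such fillers being contractible, in particular inhabited) rendering both triangles commutative, i.e. $a\circ f\simeq\id_X$ and $f\circ a\simeq\id_Y$. Thus $a$ is a two-sided homotopy inverse of $f$, so $f$ is an equivalence; combined with the previous paragraph this shows that $\E\cap\M$ is exactly the class of equivalences of $\C$. (An alternative for this direction: $f\in\M$ provides the $(\E,\M)$-factorization $f=\id_Y\circ f$ and $f\in\E$ provides the $(\E,\M)$-factorization $f=f\circ\id_X$; by the essential uniqueness of factorizations the two must agree, which forces $f$ to be an equivalence.) I do not expect a real obstacle here: the only point requiring care is the $\infty$-categorical bookkeeping --- that a diagonal filler of the displayed square genuinely encodes coherent homotopy-inverse data (exactly what the definition of orthogonality provides, as it asks both triangles to be made homotopy commutative), and that ``being an equivalence in $\C$'' is faithfully captured by ``being isomorphic to an identity in $\C^{\simplex{1}}$''. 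Note also that the reverse inclusion uses only inhabitedness of the space of lifts, so it already holds for weak factorization systems, whereas the forward inclusion genuinely uses closure of $\E$ and $\M$ under isomorphisms of arrows.
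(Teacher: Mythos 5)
Your proposal is correct and its key step is exactly the paper's argument: for $f\in\E\cap\M$, self-orthogonality $f\perp f$ applied to the square with identity horizontals yields a (contractible space of) diagonal(s) $a$ with $af\simeq\id$ and $fa\simeq\id$, i.e.\ a homotopy inverse. The paper only sketches this direction and refers to the 1-categorical literature for the rest; your completion of the converse inclusion (identities lie in both markings, and $\E$, $\M$ are closed under equivalences in $\C^{\simplex{1}}$, so every equivalence, being isomorphic there to an identity, lies in $\E\cap\M$) is precisely the standard argument being delegated, so there is no gap.
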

\begin{proof}
Again, the proof in the 1-categorical case can be found in any reference about factorization systems. The idea is extremely simple: if $g\in\E\cap \M$ then it is orthogonal to itself, and the lifting problem
\makeatother
\[
\xymatrix{
\ar@{=}[r]\ar[d]_g & \ar[d]^g \\
\ar@{=}[r] &
}
\]
\makeatletter
gives a unique homotopy-inverse for $g$.
\end{proof}
\begin{definition}\label{def:3for2}
Let $\mathcal S\in\mrk(\C)$; then, for each 2-simplex in $\C$ representing a composable pair of arrows, whose edges are labeled $f,g$, and  $f g$  we say that
\begin{itemize}
\item $\mathcal S$ is \tsc{l32} if $f, fg\in \mathcal S$ imply $g\in\mathcal S$;
\item $\mathcal S$ is \tsc{r32} if $fg, g\in \mathcal S$ imply $f\in\mathcal S$.
\end{itemize}
A marking $\mathcal S$ which is closed under composition and both \tsc{l32} and \tsc{r32} is said to \emph{satisfy the 3-for-2 property}, or a \emph{3-for-2 class}.
\end{definition}
\begin{proposition}
\label{prop:clos}
Given a \tsc{fs} $(\mathcal E,\mathcal M)$ in the quasicategory $\C$, then
\begin{itemize}
\item[(i)] If the quasicategory $\C$ has $K$-colimits, for $K$ a given simplicial set, then the full subcategory of $\C^{\Delta[1]}$ spanned by $\mathcal E$ has $K$-colimits; dually, if  the quasicategory $\C$ has $K$-limits, then the full subcategory of $\C^{\Delta[1]}$ spanned by $\mathcal M$ has $K$-limits;
\item[(ii)] The class $\mathcal E$ is \tsc{r32}, and the class $\mathcal M$ is \tsc{l32} (see Def. \refbf{def:3for2}).
\end{itemize} 
\end{proposition}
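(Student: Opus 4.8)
For (i) the plan is to exhibit $\mathcal E$ as closed under $K$-colimits, and $\mathcal M$ as closed under $K$-limits, computed inside the arrow quasicategory. First recall that $\C^{\Delta[1]}=\mathrm{Fun}(\Delta[1],\C)$ inherits $K$-colimits from $\C$ and that they are computed pointwise, so the two evaluation functors $\C^{\Delta[1]}\to\C$ preserve them. Next I would invoke the mapping-space description of orthogonality from \cite[\S\,\textbf{5.2.8}]{HTT}: for $f\colon A\to B$ and $g\colon X\to Y$, one has $f\perp g$ exactly when the square of spaces
\[
\xymatrix@C=8mm@R=7mm{
\mathrm{Map}(B,X)\ar[r]\ar[d] & \mathrm{Map}(A,X)\ar[d] \\
\mathrm{Map}(B,Y)\ar[r] & \mathrm{Map}(A,Y)
}
\]
(precomposition with $f$, postcomposition with $g$) is a homotopy pullback. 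Now take a diagram $p\colon K\to\C^{\Delta[1]}$ with values $p(k)=(e_k\colon A_k\to B_k)$ in $\mathcal E$ and let $e=\varinjlim_k e_k=(\varinjlim_k A_k\to\varinjlim_k B_k)$ be its colimit. For any $g=(X\to Y)$ in $\mathcal M$, the fact that each $\mathrm{Map}(-,Z)$ turns the colimits $\varinjlim_k A_k,\varinjlim_k B_k$ into limits identifies the orthogonality square for $(e,g)$ with $\varprojlim_k$ of the orthogonality squares for $(e_k,g)$; each of these is a homotopy pullback since $e_k\in\mathcal E={}^\perp\mathcal M$, and a limit of homotopy pullback squares is a homotopy pullback, so $e\perp g$. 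As $g\in\mathcal M$ was arbitrary, $e\in{}^\perp\mathcal M=\mathcal E$. Thus $\mathcal E$ is closed under $K$-colimits in $\C^{\Delta[1]}$, and so has them, with the inclusion preserving them. The claim for $\mathcal M$ is the mirror image: run the same computation with $\mathrm{Map}(-,-)$ preserving limits in the second variable, or pass to $\C^{\op}$, where $(\mathcal E,\mathcal M)$ becomes the factorization system $(\mathcal M^{\op},\mathcal E^{\op})$.

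For (ii), instead of carrying the classical cancellation argument through homotopy-coherently, I would deduce it from uniqueness of factorizations. By Proposition \refbf{satu} the class $\mathcal E={}^\perp\mathcal M$ is saturated, hence wide: it contains every equivalence and is closed under composition (and $\mathcal M$ likewise contains every equivalence). Suppose $g\colon X\to Y$ and $f\colon Y\to Z$ are composable with $fg,\,g\in\mathcal E$; we must show $f\in\mathcal E$. Factor $f$ as $Y\xto{e}W\xto{m}Z$ with $e\in\mathcal E$ and $m\in\mathcal M$. Then $eg\in\mathcal E$, so $X\xto{eg}W\xto{m}Z$ is an $(\mathcal E,\mathcal M)$-factorization of $fg$; but $fg\in\mathcal E$ gives the competing factorization $X\xto{fg}Z\xto{\id}Z$. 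Uniqueness of $(\mathcal E,\mathcal M)$-factorizations — a formal consequence of $\mathcal E\perp\mathcal M$, obtained by the usual back-and-forth lifting (cf. \cite[\S\,\textbf{5.2.8}]{HTT}) — produces an equivalence $Z\xto{\sim}W$ compatible with both factorizations, which forces $m$ to be an equivalence. Hence $f\simeq m\circ e$ is a composite of an equivalence with an element of $\mathcal E$, so $f\in\mathcal E$; this is the \textsc{r32} property. The \textsc{l32} property of $\mathcal M$ follows by applying the statement just proved to $(\mathcal M^{\op},\mathcal E^{\op})$ on $\C^{\op}$, under which \textsc{l32} of $\mathcal M$ translates into \textsc{r32} of the left class.

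The main obstacle in both parts is bookkeeping rather than conceptual. In (i) one must resist the temptation to argue with ``the set of fillers is a point'': everything happens in spaces, so the mapping-space formulation of $\perp$ is essential, together with the easy stability of homotopy pullbacks under limits. In (ii) the genuinely delicate point — and the reason to route through factorizations — is the homotopy coherence latent in the naive cancellation proof: a direct ``precompose the lifting problem with $g$'' argument would need to produce an equivalence, not a mere $\pi_0$-bijection, between the two relevant spaces of fillers, and establishing that is essentially re-proving uniqueness of factorizations. Citing that uniqueness as a black box sidesteps the whole issue.
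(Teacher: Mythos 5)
Your argument is correct, and it is essentially the paper's proof with the details written out: the paper disposes of (i) by citing \cite[Prop.\ \textbf{5.2.8.6}]{HTT} (whose proof is exactly your mapping-space/homotopy-pullback computation) and of (ii) by asserting that the 1-categorical cancellation argument translates straightforwardly, which is what your uniqueness-of-factorization argument carries out. Nothing in your write-up deviates from that route, so there is nothing further to flag.
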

\begin{proof}
Point (i) is \cite[Prop. \textbf{5.2.8.6}]{HTT}; point (ii) is easy to prove for 1-categories, and then the translation to the $\infty$-categorical setting is straightforward\footnote{This translation process being often straightforward, we choose to refer to 1-categorical sources to prove most of the result involving $\infty$-categorical factorization systems.}.
\end{proof}
It is a remarkable, and rather useful result, that each of the properties ($i$) and ($ii$) of the above Proposition characterizes factorizations among weak factorizations: see \cite[Prop. \textbf{2.3}]{RT} for more details.
\begin{remark}\label{korostenski} 
\def\due{{\simplex{1}}}
There is an equivalent presentation of the theory of factorization systems, neatly exposed in \cite{Korostenski199357} and polished by R. Garner in his \cite{Gar}, whose existence ultimately relies on the fact that the category $\due$ carries the structure of a universal comonoid $(\due, m,e)$ (see also \cite[\S \textbf{VII.5}]{McL}) as an object of $\cate{Cat}$.

We will need this characterization in section \textbf{4}, in the proof of Theorem \refbf{thm:rosetta}.
\end{remark}
\def\ter{\mathcal T\!\!er}
\subsection{\textsc{The fundamental connection.}}\label{fundconn}
Let now $\C$ be a quasicategory with terminal object $1$, and let $\ter$ be the class of the terminal morphisms $\{t_X\colon X\to 1\mid X\in\C\}$. Let also $\text{Rex}(\C)$ be the poset of reflective subcategories $(\cate{B}, R)$ of $\C$ (where $R\colon \C \to \cate B$ is the reflection functor, left adjoint to the inclusion).

We now want to reproduce the construction at the beginning of \cite{CHK}, where the authors build a correspondence between $\tsc{pf}_{\ter}(\C)$ (notations as in Definition \refbf{def:crumble}) and $\text{Rex}(\C)$.
\begin{proposition}\label{connectio}
There exists a(n antitone) Galois connection $\Phi\dashv \Psi$ between the posets $\text{Rex}(\C)$ and $\tsc{pf}_{\ter}(\C)$, where $\Psi$ sends $\fF=(\E,\M)$ to the subcategory $\M/1 = \{B\in\C\mid (B\to 1)\in\M\}$, and $\Phi$ is defined sending $(\cate B,R)\in\text{Rex}(\C)$ to the prefactorization \emph{right generated} (see Definition \refbf{df:rlgener}) by $\hom(\cate B)$. 
\end{proposition}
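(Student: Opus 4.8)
The plan is to exhibit $\Phi$ and $\Psi$ as mutually adjoint maps of posets: first I would check that each of the two prescriptions is well defined, and then verify the single biconditional
\[
\Phi(\cate B,R)\preceq \fF\qquad\Longleftrightarrow\qquad \cate B\subseteq\Psi(\fF),
\]
which is exactly the data of the Galois connection $\Phi\dashv\Psi$ (it becomes genuinely \emph{antitone} once $\text{Rex}(\C)$ is oriented, as is natural here, by reverse inclusion, equivalently by the class of reflection-inverted morphisms).

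\emph{The map $\Psi$ is well defined.} Let $\fF=(\E,\M)\in\tsc{pf}_{\ter}(\C)$. Since $t_X\colon X\to 1$ is $\fF$-crumbled (Definition \refbf{def:crumble}) it factors as $X\xrightarrow{\,e_X\,}LX\to 1$ with $e_X\in\E$ and $(LX\to 1)\in\M$, whence $LX\in\M/1$. For any $Z\in\M/1$ we have $e_X\perp(Z\to 1)$ by $\E={}^\perp\M$, so the commutative square with these edges has a contractible space of diagonal fillers; since $\text{Map}(LX,1)\simeq\ast$ this is precisely the assertion that $e_X^*\colon\text{Map}(LX,Z)\to\text{Map}(X,Z)$ is an equivalence. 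Thus $X\mapsto LX$ is a localization onto the (full, replete) subcategory $\M/1$, which is therefore reflective (\cite[\S \textbf{5.2.7}]{HTT}); and $\Psi$ is patently monotone in $\M$.

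\emph{The map $\Phi$ is well defined.} Here $\Phi(\cate B,R)$ is the prefactorization $\mathbb S_\perp=({}^\perp\mathcal S,({}^\perp\mathcal S)^\perp)$ with $\mathcal S=\hom(\cate B)$ (Definition \refbf{df:rlgener}), monotone in $\cate B$ by the variance of ${}^\perp(-),(-)^\perp$; what needs checking is that $t_X$ is $\Phi(\cate B,R)$-crumbled for every $X$, the candidate factorization being $X\xrightarrow{\,\eta_X\,}RX\to 1$ with $\eta_X$ the unit of the reflection. On the one hand $1\in\cate B$ (a reflective subcategory is closed under limits, the terminal object being the empty limit), so $(RX\to 1)$ is a morphism of $\cate B$, hence lies in $\mathcal S\subseteq({}^\perp\mathcal S)^\perp$. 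On the other hand $\eta_X\in{}^\perp\mathcal S$: for $f\colon B_0\to B_1$ in $\cate B$ and a square from $\eta_X$ to $f$, the equivalence $\eta_X^*\colon\text{Map}(RX,B_0)\xrightarrow{\ \sim\ }\text{Map}(X,B_0)$ yields the diagonal, coherence of the two triangles comes from the analogous equivalence into $B_1$, and contractibility of the space of liftings is just the assertion that these maps of mapping spaces are equivalences.

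\emph{The biconditional, and the main point of care.} One has $\Phi(\cate B,R)\preceq\fF$ iff $({}^\perp\hom(\cate B))^\perp\subseteq\M$, while $\cate B\subseteq\Psi(\fF)$ unfolds to $\cate B\subseteq\M/1$; I would prove the two implications separately. If $({}^\perp\hom(\cate B))^\perp\subseteq\M$, then for $B\in\cate B$ we get $(B\to 1)\in\hom(\cate B)\subseteq({}^\perp\hom(\cate B))^\perp\subseteq\M$, so $B\in\M/1$. Conversely, if $\cate B\subseteq\M/1$ it is enough to show $\hom(\cate B)\subseteq\M$, since then ${}^\perp\hom(\cate B)\supseteq{}^\perp\M=\E$ and hence $({}^\perp\hom(\cate B))^\perp\subseteq\E^\perp=\M$; and given $f\colon B_0\to B_1$ in $\cate B$, both $(B_0\to1)$ and $(B_1\to1)$ lie in $\M$, so applying the \tsc{l32} cancellation property of the right class (point (ii) of Proposition \refbf{prop:clos}, whose proof uses only orthogonality and thus applies to the right orthogonal of any class — factorizations or not) to $B_0\xrightarrow{\,f\,}B_1\to1$ gives $f\in\M$. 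This cancellation is really the only substantive ingredient; the hard part is otherwise just bookkeeping — tracking the variances that turn this into an \emph{antitone} connection, and not conflating ``a unique lift'' with ``a contractible space of lifts'' when transcribing the $1$-categorical construction of \cite[\S \textbf{2}]{CHK} into the quasicategorical setting.
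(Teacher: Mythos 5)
Your proof is correct, and it follows essentially the route the paper itself intends: the paper states this proposition without proof, deferring to the fundamental connection of \cite{CHK} (cf.\ also Remark \refbf{funtoriali}), and your argument is precisely the quasicategorical transcription of that construction — $\fF$-factoring terminal morphisms to get the reflection onto $\M/1$, checking $\eta_X\perp\hom(\cate B)$ via the unit's universal property, and reducing the adjunction biconditional to the left-cancellation property of the right orthogonal class, which indeed needs only orthogonality and so holds for prefactorizations. No gaps to report.
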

\begin{remark}\label{funtoriali}
The action of the functor $R\colon \C\to \M/1$ is induced on objects by a choice of $\mathbb F$-factorizations of terminal morphisms: $X\xto{e} RX\xto{m} 1$. On arrows it is obtained from a choice of solutions to lifting problems
\[
\xymatrix{
A \ar[r]^{ef}\ar[d] & RB\ar[d]^m \\
RA \ar[ur]_{Rf}\ar[r]_{m}& 1.
}
\]
\end{remark}
\begin{remark}
The unit $\id_{\text{Rex}(\C)}\Rightarrow\Psi\Phi$ of this adjunction is an isomorphism.

The comonad $\Phi\Psi\Rightarrow \id_{\tsc{pf}_{\ter}(\C)}$ is much more interesting, as it acts like an \emph{interior operator} on the poset $\tsc{pf}_{\ter}(\C)$, sending $\fF$ to a new prefactorization $\mathring{\fF}=(\mathring{\E},\mathring{\M})$ which is by construction \emph{reflective}, i.e. satisfies $\mathring{\fF}=\fF$ (whereas in general we have only a proper inclusion). 
\end{remark}
What we said so far entails that
\begin{proposition}
The adjunction $\Phi\dashv\Psi$ restricts to an equivalence (a bijection between posets) between the reflective prefactorizations in $\fF\in \tsc{pf}_{\ter}(\C)$ and the poset $\text{Rex}(\C)$.
\end{proposition}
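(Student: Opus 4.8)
This is a purely formal consequence of the two remarks immediately preceding the statement, together with the standard yoga of (antitone) Galois connections: any such connection restricts to an isomorphism between the subposets of elements on which the unit, respectively the counit, is invertible. The plan is to make this explicit, identifying those subposets with $\text{Rex}(\C)$ and with the reflective prefactorizations.

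First I would record what is already known. The unit $\id_{\text{Rex}(\C)}\Rightarrow\Psi\Phi$ is an isomorphism, so $\Psi\Phi=\id_{\text{Rex}(\C)}$; and the comonad $\Phi\Psi\Rightarrow\id_{\tsc{pf}_{\ter}(\C)}$ is the interior operator $\fF\mapsto\mathring{\fF}$, whose fixed points $\mathring{\fF}=\fF$ are by definition the reflective prefactorizations, a subposet I will write $\tsc{pf}_{\ter}^{\mathrm{refl}}(\C)$. From $\Psi\Phi=\id$ one gets $\Phi\Psi\Phi\Psi=\Phi(\Psi\Phi)\Psi=\Phi\Psi$, so $\Phi\Psi$ is idempotent and its image coincides with its set of fixed points, namely $\tsc{pf}_{\ter}^{\mathrm{refl}}(\C)$; likewise $\Phi\Psi\Phi=\Phi(\Psi\Phi)=\Phi$ shows that every $\Phi(\cate B,R)$ is such a fixed point. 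Hence $\Phi$ corestricts to a monotone map $\text{Rex}(\C)\to\tsc{pf}_{\ter}^{\mathrm{refl}}(\C)$, while $\Psi$ restricts along the inclusion $\tsc{pf}_{\ter}^{\mathrm{refl}}(\C)\subseteq\tsc{pf}_{\ter}(\C)$ to a monotone map into $\text{Rex}(\C)$ (it already took values there, $\Psi\fF=\M/1$ being reflective, cf.\ Remark \ref{funtoriali} and Proposition \ref{connectio}).

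Finally I would conclude by composing: $\Psi\Phi=\id$ on $\text{Rex}(\C)$ by the first remark, and $\Phi\Psi=\id$ on $\tsc{pf}_{\ter}^{\mathrm{refl}}(\C)$ by the very definition of reflectivity, so the two restricted maps are mutually inverse monotone bijections, which is exactly the asserted equivalence of posets. I do not expect any genuine obstacle here: the only substantive inputs are the invertibility of the unit of $\Phi\dashv\Psi$ and the identification of the fixed points of the comonad $\Phi\Psi$ with the reflective prefactorizations, both of which are supplied by the preceding remarks; the remainder is bookkeeping with idempotent endomorphisms of a poset.
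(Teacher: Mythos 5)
Your argument is correct and is exactly what the paper intends: the proposition is stated there as an immediate consequence ("What we said so far entails that...") of the two preceding remarks, namely that the unit of $\Phi\dashv\Psi$ is invertible and that $\Phi\Psi$ is an idempotent interior operator whose fixed points are by definition the reflective prefactorizations. Your explicit bookkeeping with the idempotents simply spells out that same formal Galois-connection argument, so there is nothing to add.
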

\begin{proposition}\label{refective}
$\fF\in \tsc{pf}_{\ter}(\C)$ is reflective if and only if $\E$ is a 3-for-2 class (see Definition \refbf{def:3for2}), or equivalently (since each $\E$-class of a factorization system is \tsc{r32}) if and only if $\E$ has the half of the 3-for-2 property it lacks.
\end{proposition}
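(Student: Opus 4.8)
The plan is to reduce the statement to the explicit shape of the interior operator $\fF\mapsto\mathring{\fF}$ induced by the Galois connection $\Phi\dashv\Psi$ on $\tsc{pf}_{\ter}(\C)$. A preliminary remark disposes of the reformulation in the statement: for \emph{any} class $\mathcal S$ of edges of $\C$ the orthogonal complement ${}^\perp\mathcal S$ contains the equivalences, is closed under composition and is \tsc{r32}, while dually $\mathcal S^\perp$ is \tsc{l32} --- these are purely formal orthogonality arguments, the prefactorization-level version of Proposition \refbf{prop:clos}.(ii). Hence for $\fF=(\E,\M)\in\tsc{pf}_{\ter}(\C)$ the left class $\E$ is automatically \tsc{r32} and closed under composition, so ``$\E$ is a 3-for-2 class'' is literally the same as ``$\E$ is \tsc{l32}'', and it suffices to prove the equivalence in this latter form.

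First I would unravel $\Psi\fF$ and $\mathring{\fF}$. By Remark \refbf{funtoriali}, $\Psi\fF=(\M/1,R)$ with unit $\eta$ given objectwise by the (necessarily unique) $\fF$-factorization $X\xto{\eta_X} RX\xto{m_X} 1$ of the terminal morphism; in particular $\eta_X\in\E$ for every $X$, and uniqueness of the factorization forces $\eta_B$ to be an equivalence whenever $B\in\M/1$. Since $\M=\E^\perp$ is \tsc{l32} and $t_B=t_{B'}\circ g$ for any $g\colon B\to B'$ in $\hom(\M/1)$, one gets $\hom(\M/1)\subseteq\M$, hence $\E={}^\perp\M\subseteq{}^\perp\hom(\M/1)=\mathring{\E}$; thus $\mathring{\fF}\preceq\fF$ always, and $\fF$ is reflective exactly when $\mathring{\E}\subseteq\E$. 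The technical heart is then the identification
\[
\mathring{\E}\;=\;{}^\perp\hom(\M/1)\;=\;\{\,h\colon X\to Y\mid Rh\ \text{is an equivalence}\,\}.
\]
For the inclusion ``$\subseteq$'', given $h\in{}^\perp\hom(\M/1)$ one feeds the naturality square $\eta_Y\circ h=Rh\circ\eta_X$ and the edge $Rh\in\hom(\M/1)$ into the orthogonality $h\perp Rh$, getting a filler $s\colon Y\to RX$; applying $R$ and using the triangle identities of the reflection together with $\eta_{RX}\simeq\id$ identifies $Rs$ as a homotopy inverse of $Rh$. For ``$\supseteq$'', if $Rh$ is an equivalence then every map out of $X$ or $Y$ into an object of $\M/1$ factors essentially uniquely through $\eta_X$ resp. $\eta_Y$, so any lifting problem for $h$ against $g\colon B\to B'$ with $B,B'\in\M/1$ is equivalent to one for $Rh$, which is solved via $(Rh)^{-1}$ and whose space of solutions is contractible.

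Granting this, both implications are short. If $\fF$ is reflective then $\E=\mathring{\E}$ is the preimage under the functor $R$ of the class of equivalences, which is 3-for-2, so $\E$ is too. Conversely, assume $\E$ is \tsc{l32} and take $h\in\mathring{\E}$, so $Rh$ is an equivalence; in the naturality square $\eta_Y\circ h=Rh\circ\eta_X$ the composite $Rh\circ\eta_X$ lies in $\E$ (it is $\eta_X\in\E$ followed by the equivalence $Rh$, and $\E$ contains the equivalences and is closed under composition), hence $\eta_Y\circ h\in\E$; since also $\eta_Y\in\E$, the \tsc{l32} property gives $h\in\E$. Thus $\mathring{\E}\subseteq\E$ and $\fF$ is reflective.

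The hard part will be the displayed identification of $\mathring{\E}$: it requires handling at once the reflection adjunction and its triangle identities, the point --- special to $\tsc{pf}_{\ter}(\C)$ rather than to genuine factorization systems --- that factorizations of \emph{terminal} morphisms are unique even though not every morphism is crumbled, and the systematic replacement of ``unique filler'' by ``contractible space of fillers''. Everything else is formal orthogonality calculus, faithfully transcribed from the 1-categorical treatment opening \cite{CHK}.
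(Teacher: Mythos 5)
Your argument is correct, but it takes a genuinely different route from the paper's. The paper disposes of the statement in one line by citing \cite[Thm.\ \textbf{2.3}]{CHK}, which characterizes the reflective interior as $\mathring{\E}=\{g\mid fg\in\E \text{ for some } f\in\E\}$; from this both implications are immediate (if $\E$ is \tsc{l32} then $\mathring{\E}\subseteq\E$, and if $\fF$ is reflective then $f,fg\in\E$ exhibit $g\in\mathring{\E}=\E$). You instead prove from scratch the alternative identification $\mathring{\E}={}^\perp\hom(\M/1)=R^{-1}(\iso)$, and deduce the equivalence from that: reflectivity gives $\E=R^{-1}(\iso)$, which is 3-for-2 as a preimage of the equivalences; conversely \tsc{l32} plus the naturality square $\eta_Y h\simeq Rh\,\eta_X$ gives $\mathring{\E}\subseteq\E$. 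All the steps check out at the prefactorization level (your preliminary observations that left classes are \tsc{r32}, contain equivalences and are closed under composition, that $\hom(\M/1)\subseteq\M$, and that $\eta_B$ is an equivalence for $B\in\M/1$ are exactly what is needed, and the transposition argument for ${}^\perp\hom(\M/1)\supseteq R^{-1}(\iso)$ is the standard one). What the paper's route buys is brevity, at the cost of outsourcing the real content to \cite{CHK} together with the blanket convention that 1-categorical orthogonality arguments transfer; what your route buys is self-containedness and the explicit description $\mathring{\E}=R^{-1}(\iso)$, which is in fact the form of the statement the paper uses later anyway (e.g.\ $\E(\tee)=\tau_{<0}^{-1}(\iso)$ and the computation $\tau_{<0}^{-1}(\iso)=\E$ in the proof of Theorem \refbf{thm:rosetta}), so your lemma would not be wasted work. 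The only caveat is the one you already flag: in the $\infty$-categorical setting the ``unique filler'' manipulations (functoriality of $R$, transposing lifting problems along the reflection adjunction) must be phrased via contractible spaces of fillers, which requires the same level of care the paper itself handles only by its stated convention of translating 1-categorical proofs.
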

\begin{proof}
It is an immediate consequence of \cite[Thm. \textbf{2.3}]{CHK}, where it is stated that $g\in\mathring{\E}$ iff $fg\in\E$ for a suitable $f\in \E$.
\end{proof}
We can also state completely dual results about \emph{co}reflective subcategories, linked to (pre)\-fac\-to\-ri\-za\-tion systems factoring at least \emph{initial} arrows in $\C$ via the correspondence $\fF \mapsto \varnothing/\E = \{Y\in\C\mid (\varnothing \to Y)\in\E\}$; the coreflection of $\C$ along $\varnothing/\E$ is given by a functor $S$ defined by a choice of $\mathbb F$-factorization $\varnothing\xto{e}SX\xto{m}X$.
\begin{remark}\label{biref}
We can also define \emph{co}reflective factorization systems, and prove that $\fF$ is coreflective iff $\M$ is \tsc{r32}, and \emph{bi}reflective factorization systems as those which are reflective \emph{and} coreflective at the same time.
\end{remark}
\subsection{\textsc{Semiexact and simple factorization systems.}}
A fairly general theory stems from the above construction, and several peculiar classes of factorization systems become of interest, aside from (co)reflective ones:
\begin{definition}\label{slex}
 A \emph{semi-left-exact} factorization system on a finitely complete $\C$ consists of a reflective $\fF=(\E,\M)\in \tsc{fs}(\C)$ such that the left class $\E$ is closed under pulling back  by $\M$ arrows; more explicitly, in the pullback
\[
\xymatrix{
\ar[r] \ar[d]_{e'}\pb & \ar[d]^{e\in\E} \\
\ar[r]_{m\in\M} &
}
\]
the arrow $e'$ lies in $\E$.
\end{definition}
Equivalent conditions for $\fF$ to be semi-left-exact are given in \cite[Thm. \textbf{4.3}]{CHK}. There is a dual definition of a semi-\emph{right}-exact factorization system. We call \emph{semiexact} a factorization system which is both left and right exact.
\begin{definition}\label{simple}
A \emph{left simple} factorization system on $\C$ consists of $\fF\in \tsc{fs}(\C)$ whose factorization action on arrows of $\C$ goes as follows: if we denote by $R$ the reflection (having unit $\eta$) $\C\to \M/1$ associated to $\fF$ via the functor $\Psi$, then the $\fF$-factorization of $f\colon X\to Y$ can be obtained as $X\to RX\times_{RY}Y\to Y$ in the diagram
\makeatother
\[
\xymatrix{
X \ar[dr]\ar@/^1pc/[drr]^{\eta_X}\ar@/_1pc/[ddr]_f& \\
& RX\times_{RY}Y \pb \ar[r]\ar[d]& RX\ar[d]^{Rf}\\
& Y \ar[r]_{\eta_Y} & RY
}
\]
\makeatletter
obtained from the naturality square for $f$. \end{definition}
\begin{remark}
Every semi-left-exact factorization system is left simple, as proved in \cite[Thm. \textbf{4.3}]{CHK}. In the 1-categorical setting, the converse doesn't hold in general (see \cite[Example \textbf{4.4}]{CHK}), whereas our Prop. \refbf{simplenormalexact} shows that in the stable $\infty$-categorical world the two notions coincide.
\end{remark}
\begin{remark}
There is an analogous notion of \emph{right simple} factorization system: semi-right-exact factorization systems are right simple.
\end{remark}
An useful result follows from the semi-exactness of a factorization system $\fF$ whose both classes are 3-for-2: these are called \emph{torsion theories} in \cite{RT} (see our Def. \refbf{tortorfree} for an extensive discussion).
\begin{proposition}
\label{facto}Let $\fF$ be a torsion theory whose reflection is $R$  and whose coreflection is $S$: then we have that
\[
SY \amalg_{SX}X \cong RX \times_{RY}Y 
\]
for any $f\colon X\to Y$.
\end{proposition}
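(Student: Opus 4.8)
The plan is to recognise both $SY\amalg_{SX}X$ and $RX\times_{RY}Y$ as the middle term of an $(\E,\M)$-factorization of one and the same morphism $f\colon X\to Y$, and then to conclude by the essential uniqueness of such factorizations.

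First recall that a torsion theory is, by definition, a semiexact factorization system whose two classes are both 3-for-2 (this is the characterization recalled in the paragraph preceding the statement); in particular $\fF$ is semi-left-exact. By the remark following Definition~\ref{simple}, semi-left-exactness forces $\fF$ to be left simple, so the $(\E,\M)$-factorization of $f\colon X\to Y$ is precisely $X\to RX\times_{RY}Y\to Y$, built from the naturality square at $f$ of the unit $\eta$ of the reflection $R\colon\C\to\M/1$, with the first leg in $\E$ and the second in $\M$.

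Next I would dualize. Being semiexact, $\fF$ is also semi-right-exact, hence --- by the opposite-category versions of Definition~\ref{simple} and of the remark just invoked --- right simple; concretely this means that the $(\E,\M)$-factorization of $f\colon X\to Y$ can be read off the pushout square for the counit $\varepsilon$ of the coreflection $S\colon\C\to\varnothing/\E$, namely as $X\to SY\amalg_{SX}X\to Y$, where $SY\amalg_{SX}X$ is the pushout of $SY\xot{Sf}SX\xto{\varepsilon_X}X$ and, again, the first leg lies in $\E$ and the second in $\M$. This passage is purely formal, amounting to transposing Definition~\ref{simple} along $\C\rightsquigarrow\C^{\op}$.

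Finally, the factorization of an arrow through a quasicategorical factorization system is unique up to a contractible space of choices --- this is exactly the force of the word ``necessarily'' in Definition~\ref{def:crumble}, itself an immediate consequence of the orthogonality $\E\perp\M$. Applying this to the two factorizations produced above yields a canonical equivalence $SY\amalg_{SX}X\simeq RX\times_{RY}Y$, which is the assertion; unwinding the lifting that realizes it also shows the equivalence is natural in $f$. The only point that genuinely requires care --- and hence the main obstacle --- is checking that the pushout and the pullback appearing in the statement exist in $\C$ and that the indicated legs really do land in $\E$ and $\M$; but both facts come for free once semi-left-exactness and semi-right-exactness are in hand, so the whole argument collapses to a two-line deduction from the simple-factorization descriptions of $f$.
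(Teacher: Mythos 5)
Your proof is correct and is essentially the paper's own argument: semi-left-exactness (via left simplicity) exhibits $X\to RX\times_{RY}Y\to Y$ and semi-right-exactness (via right simplicity) exhibits $X\to SY\amalg_{SX}X\to Y$ as $(\E,\M)$-factorizations of $f$, which must agree up to equivalence by uniqueness of factorizations. The paper supplements this with a second, more explicit argument that builds the comparison map $Q\to P$ by hand and shows it lies in $\E\cap\M$ using stability under base/cobase change and the 3-for-2 property, but this is not needed for the statement; note also that, like the paper, you are taking the semiexactness of the torsion theory as part of the standing hypotheses of this subsection rather than deriving it from Definition \ref{tortorfree}.
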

\begin{proof}
The claim holds simply because  left semiexactness gives the $\fF$-factorization of $f\colon X\to Y$ as $X\to RX \times_{RY}Y \to Y$, and right semiexactness gives $X\to SY \amalg_{SX}X\to Y$.

But there is a more explicit argument which makes explicit use of the orthogonality and 3-for-2 property: consider the diagram
\makeatother
\[
\xymatrix@R=2mm{
SX \ar@{}[ddr]|(.7){\text{\pigpenfont R}}\ar[r]^{\sigma_X}\ar[dd]_{Sf}& X \ar@/^1pc/[drrr]^{\eta_X}\ar[drr]\ar[dddrr]^f\ar[dd]& \\
&&&P \ar@{}[ddr]|(.3){\text{\pigpenfont J}}\ar[r]\ar[dd] & RX\ar[dd]^{Rf} \\
SY \ar@/_1pc/[drrr]_{\sigma_Y}\ar[r]& Q\ar[drr] &&\\
&&& Y \ar[r]_{\eta_Y}& RY
}
\]
\makeatletter
where $\eta$ is the unit of the reflection $R$, and $\sigma$ is the counit of the coreflection $S$. Now the arrow $\var{X}{Q}$ is in $\E$, and the arrow $\var{P}{Y}$ is in $\M$, as a consequence of stability under cobase and base change (see Prop. \refbf{satu}); this entails that there is a unique $w\colon Q\to P$ making the central square commute. Now, semiexactness entails that $X\to P\to Y$ and $X\to Q\to Y$ are both $\mathbb F$-factorizations of $f\colon X\to Y$, and since both classes $\E,\M$ are 3-for-2, we can now conclude that $w\colon Q\to P$ lies in $\E\cap\M$, and hence is an equivalence (see Prop. \refbf{thereiso}).
\end{proof}
\section{Stable {$\infty$}-categories.}
 \epigraph{\lettrine{O}{tra escuela} declara [\dots] que nuestra vida es apenas el recuerdo o reflejo crepuscular, y sin duda falseado y mutilado, de un proceso irrecuperable.}{J. L. Borges, \emph{Tl\"on, Uqbar, Orbis Tertius}}
Our aim in this section is to specialize the above definitions to the case of a \emph{stable $\infty$-category} in the sense of Lurie's \cite{LurieHA}, in order to present the main result of this note:
\begin{theorem}\label{thm:rosetta0}
$t$-structures in the sense of \cite[Def. \textbf{1.2.1.4}]{LurieHA} correspond to \emph{normal} factorization systems in the stable $\infty$-category $\C$, as in Definition \refbf{normal}.
\end{theorem}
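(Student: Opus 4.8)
The plan is to set up an explicit passage in both directions between $t$-structures and normal factorization systems and to check that the two passages are mutually inverse. Throughout I work in the pointed setting --- a stable $\infty$-category has a zero object $0=1=\varnothing$ --- and I use the two features that make $\C$ rigid: a commutative square in $\C$ is a pullback if and only if it is a pushout, and every $f\colon X\to Y$ sits in an essentially unique fiber--cofiber sequence $\fib f\to X\to Y\to\cofib f$, with $\cofib f$ the suspension of $\fib f$.

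From a $t$-structure $(\C_{\ge 0},\C_{\le 0})$ in the sense of \cite[Def. \textbf{1.2.1.4}]{LurieHA} I would set
\[
\E=\{\,f\mid \fib f\in\C_{\ge 1}\,\},\qquad \M=\{\,f\mid \fib f\in\C_{\le 0}\,\}.
\]
Since fibers are stable under pullback and cofibers under pushout, both classes are simultaneously pullback- and pushout-stable, which delivers the normality clause of Definition \refbf{normal} almost for free. The factorization of $f\colon X\to Y$ comes from the truncation triangle $\tau_{\ge 1}\fib f\to\fib f\to\tau_{\le 0}\fib f$: the composite $\tau_{\ge 1}\fib f\to\fib f\to X$ is null onto $Y$, so setting $Z:=\cofib(\tau_{\ge 1}\fib f\to X)$ produces $X\xto{e}Z\xto{m}Y$ with $\fib e\simeq\tau_{\ge 1}\fib f\in\C_{\ge 1}$ and, by the octahedral axiom applied to $\tau_{\ge 1}\fib f\to\fib f\to X$, with $\fib m\simeq\tau_{\le 0}\fib f\in\C_{\le 0}$. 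Orthogonality $\E\perp\M$ is a mapping-spectrum computation: in an $(\E,\M)$-square the space of liftings is the total fiber of the attached square of mapping spaces, whose corresponding double fiber of mapping spectra is $\mathrm{map}(\cofib e,\fib m)$; as $\cofib e\in\C_{\ge 1}$ and $\fib m\in\C_{\le 0}$, the $\hom$-vanishing axiom of a $t$-structure makes this spectrum zero, so the square of spaces is a homotopy pullback and the lifting is unique. Finally the equalities $\E={}^\perp\M$ and $\M=\E^\perp$ follow, exactly as in the $1$-categorical case, from the existence of factorizations together with closure of aisles and coaisles under retracts (they are closed under colimits, resp.\ limits, and $\C$ is idempotent complete); and the two halves of the $3$-for-$2$ property that upgrade this prefactorization to a torsion theory follow from Proposition \refbf{refective} and its dual, once one checks through the long exact fiber sequences of a composable pair that $\C_{\ge 1}$ and $\C_{\le 0}$ are closed under the relevant cones.

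Conversely, from a normal factorization system $\fF=(\E,\M)$ I would put $\C_{\ge 1}:=\varnothing/\E$ and $\C_{\le 0}:=\M/1$, the coreflective and reflective subcategories that the fundamental connection of Section \refbf{fundconn} attaches to $\fF$. The orthogonality axiom is immediate: for $X\in\varnothing/\E$ and $Y\in\M/1$, applying $\perp$ to the square with edges $0\to Y$, $X\to 0$, $(0\to X)\in\E$ and $(Y\to 0)\in\M$ exhibits $\mathrm{map}(X,Y)$ as the fiber of a map onto a contractible space, hence contractible, so $X$ and $Y$ are $\hom$-orthogonal. Closure of the aisle and coaisle under suspension and loops, and the functorial fiber sequence $\tau_{\ge 1}X\to X\to\tau_{\le 0}X$ realizing the truncation axiom, are where \emph{normality} is indispensable: it is precisely what forces the reflection $R$ obtained by factoring $X\to 1$ and the coreflection $S$ obtained by factoring $\varnothing\to X$ to assemble into one and the same fiber--cofiber sequence $SX\to X\to RX$, so that the $(\E,\M)$-factorization of $X\to 0$ is literally a truncation triangle. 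One then invokes Proposition \refbf{facto} and the identity ``pullback $=$ pushout'' to identify this with the canonical factorization and to read off that the induced truncations obey Lurie's axioms.

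I expect the genuine obstacle to be exactly this last point: verifying that normality together with the $3$-for-$2$ conditions is \emph{precisely} as strong as the coherent tower of truncations of a $t$-structure --- in particular that $\varnothing/\E$ is closed under extensions and suspension and that the two one-sided factorizations glue into a single bicartesian truncation square. The orthogonality bookkeeping in the other direction is routine once recast in terms of mapping spectra, and the leftover verifications (closure of $\E$ and $\M$ under retracts, $3$-for-$2$, the equalities $\E={}^\perp\M$ and $\M=\E^\perp$) are of the ``straightforward translation from the $1$-categorical source'' kind already invoked for Proposition \refbf{prop:clos}. One finishes by checking that the two passages are mutually inverse --- that $\M/1$ of the factorization system attached to a $t$-structure returns $\C_{\le 0}$, and that the classes attached to $\varnothing/\E$ and $\M/1$ are the original $\E$ and $\M$ --- which reduces, via the fundamental connection, to the bijection between reflective prefactorizations and reflective subcategories established earlier.
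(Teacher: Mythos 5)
Your converse direction (from a normal torsion theory to a $t$-structure via $0/\E$ and $\M/0$, with normality supplying the fiber sequence $SX\to X\to RX$) is in outline the same as the paper's. The forward direction, however, contains a genuine error: membership of a morphism in the classes of the torsion theory attached to a $t$-structure is \emph{not} a condition on its fiber. The correct classes are $\E(\tee)=\tau_{<0}^{-1}(\iso)$ and $\M(\tee)=\tau_{\geq 0}^{-1}(\iso)$ (equivalently, $\M$ is the class of maps whose naturality square for the truncation is cartesian), and no choice of threshold makes these of the form $\{f\mid \fib f\in\C_{\ge c}\}$: in spectra, with your indexing, the maps $0\to H\mathbb{Z}[1]$ and $H\mathbb{Z}\to 0$ have equivalent fibers $H\mathbb{Z}$, yet the first \emph{must} lie in $\E$ (its codomain lies in the aisle $\C_{\ge 1}$, which your own inverse construction recovers as $0/\E$) while the second must not. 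With your definition $\E=\{f\mid\fib f\in\C_{\ge 1}\}$ one finds $(A\to 0)\in\E$ for every $A\in\C_{\ge 1}$ but $(0\to A)\in\E$ only for $A\in\C_{\ge 2}$: this contradicts the \lemname Lemma \refbf{satorlemma}, so your $\E$ is not the left class of any bireflective factorization system; concretely it fails the left half of 3-for-2 (consider $0\to H\mathbb{Z}[1]\to 0$), so $(\E,\M)$ is not a torsion theory in the sense of Definition \refbf{tortorfree} and a fortiori not normal in the sense of Definition \refbf{normal} --- it does not land in the codomain of the claimed bijection. The round trip fails accordingly: $0/\E=\C_{\ge 2}\neq\C_{\ge 1}$, and the pair $(\C_{\ge 2},\C_{\le 0})$ is not a $t$-structure (no object with nonzero $\pi_1$ admits the required fiber sequence). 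A related overstatement: for $A\in\C_{\ge 1}$, $B\in\C_{\le 0}$ the $t$-structure kills only the nonnegative homotopy groups of the mapping spectrum $\mathrm{map}(A,B)$, not the whole spectrum, so your justification of orthogonality (``this spectrum is zero'') is incorrect as stated, even though enough vanishing survives for your particular classes.

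For comparison, the paper defines $\E(\tee)$ and $\M(\tee)$ by truncation-inversion, builds the functorial factorization $X\to X_{<0}\times_{Y_{<0}}Y\to Y$ from the pullout diagram ($\star\star$), and verifies that this is an Eilenberg--Moore factorization functor in the sense of Korostenski--Tholen, which is what replaces your appeal to ``retract closure plus factorization, as in the 1-categorical case''; it then checks $\E_F=\tau_{<0}^{-1}(\iso)$ and $\M_F=\tau_{\ge 0}^{-1}(\iso)$, from which the 3-for-2 property of both classes is automatic (preimages of equivalences under a functor), and normality is immediate from Proposition \refbf{equivcondnorm}. Also note that in the converse direction closure of the aisle and coaisle under shifts comes from closure of $\E$ under colimits in the arrow category (Proposition \refbf{prop:clos}(i)), not from normality; normality is what produces the truncation fiber sequence. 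To repair your argument you would need to replace the fiber-conditions by the truncation-inverted classes (or the equivalent cartesian-square description) and then carry out the orthogonality and factorization-system verifications for those classes.
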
 
Given our particular interest, we will now recall those features of theory of stable $\infty$-categories which will be relevant to this note. An extensive treatment can be found in \cite{LurieHA}.

\subsection{\textsc{Triangulated higher categories.}} Pathological examples aside (see \cite{FMS}, from which the following distinction is taken verbatim), there are essentially two procedures to build ``nice'' triangulated categories:
\begin{itemize}
\item In Algebra they often arise as the stable category of a Frobenius category (\cite[\textbf{4.4}]{shc}, \cite[\textbf{IV.3} Exercise \textbf{8}]{Gel}).
\item In algebraic topology they usually appear as a full triangulated subcategory of the homotopy category of a Quillen stable model category \cite[\textbf{7.1}]{Hov}.
\end{itemize}
The [closure under equivalence of] these two classes contain respectively the so-called \emph{algebraic} and \emph{topological} triangulated categories described in \cite{Sch}.
Classical triangulated categories can also be seen as Spanier-Whitehead stabilizations of the homotopy category $\text{Ho}(\cate{M})$ of a pointed model category $\cate M$ (see \cite{DeA} thesis for an exhaustive treatment of this construction).

Because of this remark, analyzed also in \cite[Ch. \textbf{7}]{Hov}, \emph{stable model categories} can be thought of as counterparts to triangulated categories in the higher-categorical world.

Several different models for higher-dimensional analogues of triangulated categories arose as a reaction to different needs in abstract Homological Algebra (where the paradigmatic example of such an object is the derived categories of chain complexes of modules on a ring), algebraic geometry (where one is led to study derived categories of coherent sheaves on spaces) or in a fairly non-additive setting as algebraic topology (where the main example of such a structure is the homotopy category of spectra); there's no doubt that allowing a certain play among different models may be more successful in describing a particular phenomenon (or a wider range of phenomena), whereas being forced to a particular one may turn out to be insufficient.

Now, a ``principle of equivalence'' in higher category theory tells us that there must be an equivalent formulation (or better, \emph{presentation}) of triangulated $\infty$-categories in terms of quasicategory theory, such that when a quasicategory $\C$ enjoys a property which \cite{LurieHA} calls ``stability'', then
\begin{itemize}
\item its homotopy category $\text{Ho}(\C)$ is triangulated structure in the classical sense;
\item the axioms characterizing a triangulated structure are ``easily verified and well-motivated consequences of evident universal arguments'' (see \cite[Remark \textbf{1.1.2.16}]{LurieHA}) living in $\C$;
\item classical derived categories arising in Homological Algebra can be regarded as homotopy categories of stable $\infty$-categories functorially associated to an abelian $\mathcal A$ (see \cite[\S \textbf{1.3.1}]{LurieHA}).
\end{itemize}
Building this theory is precisely the aim of \cite[Ch. \textbf{1.1}]{LurieHA}. We now want to give a rapid account of its main lines.

We invite the reader to take \cite{LurieHA} as a permanent reference for this section, hoping to convince those already acquainted with the theory of triangulated categories that they are already able to manipulate the entire theory of stable $\infty$-categories even if they don't know.
\subsection{\textsc{Stable quasicategories.}}
Let $\simplex{1}\times \simplex{1}$ be the category \[
\xymatrix{
(0,0) \ar[r]\ar[d]& (0,1)\ar[d]\\
(1,0) \ar[r] & (1,1)
}
\] and denote it as $\square$ for short. It is obvious that $\text{Map}(\square, \C)$ consists of commutative squares in $\C$. This said we can give the following
\begin{definition}[(Co)cartesian square]
A diagram $F\colon \square\to \C$ in a (finitely bicomplete) quasicategory is said to be \emph{cocartesian} (resp., \emph{cartesian}) if the square
\[
\xymatrix{
F(0,0) \ar[r]\ar[d]& F(0,1) \ar[d]\\
F(1,0) \ar[r] & F(1,1)
}
\]
is a homotopy pushout (resp., a homotopy pullback)
\end{definition}
\begin{definition}[Stable quasicategory]\label{def:stablequasi}
A quasicategory $\C$ is called \emph{stable} if
\begin{enumerate}
\item it has any finite (homotopy) limit and colimit;
\item A square $F\colon\square \to \C$ is cartesian if and only if it is cocartesian.
\end{enumerate}
\end{definition}
\begin{notat}
Squares which are both pullback and pushout are called \emph{pulation squares} or \emph{bicartesian squares} (see \cite[Def. \textbf{11.32}]{acc}) in the literature.
We choose to call them \emph{pullout squares} and we refer to axiom \textbf{2} above as the \emph{pullout axiom}: in such terms, a stable quasicategory is a finitely bicomplete quasicategory satisfying the pullout axiom.
\end{notat}
Most of the arguments in the following discussion are a consequence of a fundamental remark:
\begin{remark}
The pullout axiom implies that the class $\mathcal P$ of pullout squares in a category $\C$ satisfies a 3-for-2 property: in fact, it is a classical result (see \cite[Prop. \textbf{11.10}]{acc} and its dual) that pullback squares have \tsc{r32} property and dually, pushout squares have \tsc{l32} property (these are called \emph{pasting laws} for pullback and pushout squares) in the sense of our Definition \refbf{def:3for2} when regarded as morphisms in the category $\C^{\simplex{1}}$.
\end{remark}
\begin{notat}
It is a common practice to denote diagrammatically a (co)cartesian square ``enhancing'' the corner where the universal object sits (this well-established convention has been used freely in the previous sections): as a ``graphical'' representation of the auto-duality of the pullout axiom, we choose to denote a pullout square enhancing \emph{both} corners: 
\makeatother
\[
\xymatrix{
\pp \ar[r]\ar[d] & \ar[d]\\
\ar[r] & 
}
\]
\makeatletter
\end{notat}
\begin{remark}[The pullout axiom induces an enrichment.]
What we called the pullout axiom in Definition \refbf{def:stablequasi} is an extremely strong assumption\footnote{So strong that it becomes trivial in low dimensions: it's easy to see that a 1-category $\C$ where a square is a pullback if and only if it is a pushout is forced to be the terminal category $\cate{1}$.} which, taken alone, characterizes almost completely the structure of a stable $\infty$-category. 

For instance, by invoking basically only the pullout axiom, one can prove that a stable quasicategory $\C$
\begin{itemize}
\item has a zero object, i.e. there exists an arrow $1\to\varnothing$ (which is forced to be an isomorphism);
\item $\C$ has biproducts, i.e. $X\times Y\simeq X\amalg Y$ for any two $X,Y\in\C$, naturally in both $X$ and $Y$. 
\end{itemize}
(the interested reader can take this statement as a little challenge to test the power of the pullout axioms, and their understanding of the theory so far).
\end{remark}
\subsection{\textsc{Loops and suspensions.}} The \emph{suspension} $\Sigma X$ of an object $X$ in a finitely cocomplete, pointed quasicategory $\C$ is defined as the (homotopy) colimit of the diagram $0\leftarrow X\to 0$; dually, the \emph{looping} $\Omega X$ of an object $X$ in such a $\C$ is defined as the (homotopy) limit of $0\to X\leftarrow 0$. 

This notation is natural if one thinks to the category of pointed spaces, where this operation amounts to the well-known \emph{reduced suspension} of $X$, $\Sigma\colon X\mapsto X\land S^1$; evaluating a square $F\colon \square\to \C$ at its right-bottom vertex gives an endofunctor $\Sigma\colon \C\to \C$, and where the looping $\Omega$ is the right adjoint of $\Sigma$. We depict the objects $\Sigma X, \Omega X$ as vertices of the diagrams
\makeatother
\[
\xymatrix{%
X\ar[r]\ar[d]\po & 0\ar[d] & \Omega X \pb \ar[r]\ar[d] & 0\ar[d]\\
0\ar[r] & \Sigma X & 0\ar[r] & X
}
\]
\makeatletter
The pullout axiom defining a stable quasicategory implies that these two correspondences (which in general are adjoint functors between quasicategories: see \cite[Remark \textbf{1.1.2.8}]{LurieHA}) are a pair of mutually inverse equivalences (\cite[Prop. \textbf{5.8}]{Gro}).
\begin{notat}
In a stable setting, we will often denote the image of $X$ under the suspension $\Sigma$ as $X[1]$, and by extension $X[n]$ will denote, for any $n\ge 2$ the object $\Sigma^nX$ (obviously, $X[0]:=X$). Dually, $X[-n]:=\Omega^n X$ for any $n\ge 1$.

This notation is in line with the long tradition to denote $X[1]$ the \emph{shift} of an object $X$ in a triangulated category; \emph{distinguished triangles}, often denoted as
\[
X \to Y \to Z \to X[1]
\]
(or $X\to Y\to Z\to^+$ for short) in the triangulated world, are called (again distinguished triangles or) \emph{fiber sequences} in the stable world (see \cite[Def. \textbf{1.1.2.11}]{LurieHA}) and depicted as pullout squares
\[
\xymatrix{
X \pp \ar[r]\ar[d]& Y \pp \ar[r]\ar[d]& 0\ar[d] \\
0\ar[r] & Z\ar[r] & W
}
\]
(again, this terminology is clear having in mind the topological example of the category of spectra) The pullout axiom now entails that $W\cong X[1]$.
\end{notat}
The definitions given so far amount to a process of \emph{enhancement} of the classical theory of triangulated category: one of the most unsatisfactory features of the classical theory (at least, for a category theorist\dots) is that the well-known localization procedures used to build them \emph{destroy} even simple limits and colimits. One of the advantages of the theory exposed so far is that instead, now we are working at a prior stage, where these limits still exist (Definition \refbf{def:stablequasi}, axiom \textbf{1}) and are extremely peculiar (Definition \refbf{def:stablequasi}, axiom \textbf{2})\footnote{Albeit seldom spelled out explicitly, we can trace in this remark a fundamental tenet of the theory exposed in \cite{LurieHA}:
\begin{quote}
In the same way every shadow comes from an object, produced once the sun sheds a light on it, every ``non-pathological'' triangulated category is the 1-dimensional shadow (i.e. the homotopy category) of an higher-dimensional object.
\end{quote}
No effort is made here to hide that this fruitful metaphor is borrowed from \cite{Olivia}, even if with a different meaning and in a different context.}.
\subsection{\textsc{$t$-structures.}} We can now address the main aim of our work, the investigation of $t$-structures in stable $\infty$-categories. Our reference for the classical theory in triangulated categories are the book \cite{KS1} and the classical \cite{BBDPervers}; the $\infty$-categorical analogue of the theory has been defined by Lurie in \cite[\S \textbf{1.2.1}]{LurieHA}. We now merely recall a couple of definitions for the ease of the reader: from \cite[Def. \textbf{1.2.1.1} and \textbf{1.2.1.4}]{LurieHA} one obtains
\begin{definition}\label{tistru}
Let $\C$ be a stable quasicategory. A \emph{$t$-structure} on $\C$ consists of a pair $\tee=(\C_{\ge 0},\C_{< 0})$ of full sub-quasicategories satisfying the following properties:
\begin{itemize}
\item[(i)] orthogonality: $\C(X, Y)$ is a contractible simplicial set for each $X\in \C_{\ge 0}$, $Y\in \C_{< 0}$;
\item[(ii)] Setting $\C_{\geq 1}=\C_{\geq 0}[1]$ and $\C_{<-1}= \C_{<0}[-1]$ one has $\C_{\geq 1}\subseteq \C_{\geq 0}$ and $\C_{<-1}\subseteq \C_{<0}$;
\item[(iii)] Any object $X\in\C$ fits into a (homotopy) fiber sequence $X_{\ge 0}\to X\to X_{< 0}$, with $X_{\ge 0}$ in $\C_{\ge 0}$ and $X_{<0}$ in $\C_{< 0}$. 
\end{itemize}
\end{definition}
\begin{remark}
The assignments $X\mapsto X_{\ge 0}$ and $X\mapsto X_{< 0}$ define two functors $\tau_{\ge 0}$ and $\tau_{<0}$ which are, respectively, a right adjoint to the inclusion functor $\C_{\ge 0}\hookrightarrow \C$ and a left adjoint to the inclusion functor $\C_{<0}\hookrightarrow \C$. In other words, $\C_{\ge 0},\C_{<0} \subseteq \C$ are respectively a coreflective and a reflective subcategory of $\C$: see \cite[\textbf{1.2.1.5-8}]{LurieHA} this in particular implies that 
\begin{itemize}
\item The full subcategories $\C_{\ge n}=\C_\ge[n]$, are coreflective via a coreflection $\tau_{\ge n}$; dually $\C_{<n}=\C_{<0}[n]$ are reflective via a reflection  $\tau_{< n}$, 
\item $\C_{< n}$ is stable under all limits which exist in $\C$, and colimits are computed by applying the reflector $\tau_{< n}$ to the colimit computed in $\C$; dually, $\C_{\ge n}$ is stable under all colimits, and limits are $\C$-limits coreflected via $\tau_{\ge n}$; in particular $\tau_{<n}$ maps a pullout in $\C$ to a pushout in $\C_{<n}$ while $\tau_{\geq n}$ maps a pullout in $\C$ to a pullback in $\C_{\geq n}$.
\end{itemize}
\end{remark}
\begin{notat}
An important notational remark: the subcategory that we here denote $\C_{<0}$ is the subcategory which would be denoted $\C_{\leq 0}[-1]$ in \cite{LurieHA}.
\end{notat}
\begin{remark}
It's easy to see that Definition \refbf{tistru} is modeled on the classical definition of a $t$-structure (\cite{KS1}, \cite{BBDPervers}). In fact a $t$-structure $\tee$ on $\C$, following \cite{LurieHA}, can also be characterized as a $t$-structure (in the classical sense) on the homotopy category of $\C$ (\cite[Def. \textbf{1.2.1.4}]{LurieHA}), once $\C_{\ge 0}, \C_{<0}$ are identified with the subcategories of the homotopy category of $\C$ spanned by those objects which belong to the (classical) $t$-structure $\tee$ on the homotopy category. \end{remark}
\begin{remark}
\marginnote{\textdbend} 
The notation $\C_{\geq 1}$ for $\C_{\geq 0}[1]$ is powerful but potentially misleading: namely one is lead to figure $\C_{\geq 0}$ a the seminfinite interval $[0,+\infty)$ in the real line and $\C_{\geq 1}$ as the seminfinite interval $[1,+\infty)$. This is indeed a very useful analogy (see Remark \ref{evocative}) but one should always keep in mind that as a particular case of the inclusion condition $\C_{\geq 1}\subseteq \C_{\geq 0}$ also the extreme case $\C_{\geq 1}=\C_{\geq 0}$ is possible, in blatant contradiction of the real line semintervals mental picture.
\end{remark}
\begin{remark}\label{slicing}
The collection $\tsc{ts}(\C)$ of all $t$-structures on $\C$ has a natural posetal structure by $\tee\preccurlyeq \tee'$ if $\C_{<0}\subseteq \C'_{<0}$. The ordered group $\mathbb{Z}$ acts on $\tsc{ts}(\C)$ with the generator $+1$ mapping a $t$-structure $\tee=(\C_{\geq0},\C_{<0})$ to the $t$-structure $\tee[1]=(\C_{\geq 1},\C_{<1})$. Since $\tee\preccurlyeq\tee[1]$ one sees that $\tsc{ts}(\C)$ is naturally a $\mathbb{Z}$-poset. It is therefore meaningful to consider \emph{families} of $t$-structures on $\C$ indexed by a $\mathbb{Z}$-poset $J$, i.e., $\mathbb{Z}$-equivariant morphisms of posets $J\to \tsc{ts}(\C)$. In particular, for $J=\mathbb{R}$ one recovers Bridgeland's notion of \emph{slicing} \cite{Brid}, see \cite{GKR}. A more detailed discussion of slicings in stable $\infty$-categories will hopefully appear elsewhere. 
\end{remark}
\begin{remark}
Alternatively (\cite[Prop. \textbf{1.2.1.16}]{LurieHA}) a $t$-structure $\tee$ on $\C$ is completely determined by a $t$-\emph{localization} $L$, i.e. by a reflection functor $L$ satisfying one of the following equivalent properties:
\begin{itemize}
\item The class of $L$-local morphisms\footnote{An arrow $f$ in $\C$ is called \emph{$L$-local} if it is inverted by $L$; it's easy to see that $L$-local objects form a quasisaturated class in the sense of \cite[Def. \textbf{1.2.1.14}]{LurieHA}.} is generated (as a quasisaturated marking) by a family of initial arrows $\{0\to X\}$;
\item The class of $L$-local morphisms is generated (as a quasisaturated marking) by the class of initial arrows $\{0\to X\mid LX\simeq 0\}$; 
\item The essential image $L\C\subset\C$ is an extension-closed class.
\end{itemize}
The $t$-structure $\tee(L)$ determined by the $t$-localization $L\colon \C\to \C$ is given by the pair of subcategories
\[
\C_{\ge 0}(L) := \{A\mid LA\simeq 0\},\qquad \C_{<0}(L) := \{B\mid LB\simeq B\} .
\]
It is no surprise that the obvious example of $t$-localization is the truncation $\tau_{<0}:\C\to\C_{<0}$ associated with a $t$-structure $(\C_{\geq0},\C_{<0})$, and that one has $\C_{\ge 0}(\tau_{<0})=\C_{\ge 0}$ and $\C_{< 0}(\tau_{<0})=\C_{< 0}$.
\end{remark}
This connection is precisely what motivated us to exploit the theory of factorization systems to give an alternative description of the data contained in a $t$-structure: the synergy between orthogonality encoded in property (i) of Definition \refbf{tistru} and reflectivity of the subcategories generated by $\tee$, suggest to translate in the language of (stable) $\infty$-categories the content of \cite{RT} and \cite{CHK}, on whose backbone we build the rest of the paper
\section{{$t$}-structures are factorization systems.}
\epigraph{\lettrine{A}{caso} un arquetipo no revelado a\'un a los hombres, un objeto eterno (para usar la nomenclatura de Whitehead), est\'e ingresando paulatinamente en el mundo; su primera manifestaci\'on fue el palacio; la segunda el poema. Quien los hubiera comparado habr\'ia visto que eran esencialmente iguales.}{J. L. Borges, \emph{El sue\~no de Coleridge}}
This is the gist of the paper, where we provide a detailed proof of the result previewed on page \pageref{thm:rosetta0}: the following section is entirely devoted to a complete, exhaustive proof that normal factorization systems correspond to $t$-structures on a stable quasicategory. We begin introducing the former notion.
\subsection{\textsc{Normal torsion theories.}}
Following (and slightly adapting) \cite[\S 4]{RT} we give the following definition. From now on $\C$ will denote a stable $\infty$-category, with zero object $0$.
\begin{definition}[Torsion theory, torsion classes]\label{tortorfree}
A \emph{torsion theory} in $\C$ consists of a factorization system $\fF=(\E,\M)$ (see Remark \refbf{biref}), where both classes are 3-for-2 (in the sense of Definition \refbf{def:3for2}). We define
 $\T = 0/\E$ and $\F =  \M/0$ (see Prop. \refbf{connectio}) to be respectively the \emph{torsion} and \emph{torsion-free} classes associated to the torsion theory.
\end{definition}
\begin{remark}
In view of Prop. \refbf{refective} and its dual, the torsion and torsion-free classes of a torsion theory $\fF\in \textsc{fs}(\C)$ are respectively a coreflective and reflective subcategory of $\C$.

If we $\fF$-factor the terminal and initial morphisms of any object $X\in \C$, we obtain the the reflection $R\colon \C\to \M/0$ and coreflection $S\colon \C \to 0/\E$, and a ``complex'' 
\[\tag{$\star$}
\xymatrix{
 SX\ar[r] & X\ar[r] & RX
}
\]
(in the sense of pointed categories), i.e., a homotopy commutative diagram 
\[
\xymatrix{
 SX\ar[r] \ar[d]& X\ar[d]\\
 0\ar[r] & RX
}
\]
as it is immediately seen by the orthogonality condition.
\end{remark}
\begin{lemma}\label{orthoreflex}
Let $(\C,\fF)$ be a $\infty$-category endowed with a torsion theory. Then the following conditions are equivalent:
\begin{enumerate}
\item $A\in \T = 0/\E$;
\item $\C(A, X)$ is contractible for each $X\in \F=\M/0$;
\item $RA=0$.
\end{enumerate}
In particular, one has $RSX=0$ for every $X\in \C$.
\end{lemma}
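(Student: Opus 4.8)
The plan is to prove the cycle of implications $(1)\Rightarrow(2)\Rightarrow(3)\Rightarrow(1)$, using only orthogonality, the definition of the reflection $R$ via $\fF$-factorizations of terminal morphisms, and the $3$-for-$2$ property of $\E$.

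First, $(1)\Rightarrow(2)$. Assume $A\in\T=0/\E$, so $(0\to A)\in\E$. Given $X\in\F=\M/0$, i.e.\ $(X\to 0)\in\M$, I consider the square whose two ways around are $0\to A\to X$ and $0\to 0\to X$ (using the zero object). Orthogonality $(0\to A)\perp(X\to 0)$ gives that the space of diagonal fillers $A\to X$ is contractible; but this space of fillers is exactly the mapping space $\C(A,X)$ (the outer commutative square is automatic since everything factors through $0$). Hence $\C(A,X)$ is contractible.

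Next, $(2)\Rightarrow(3)$. Recall (Remark \refbf{funtoriali}) that $RA$ is obtained from an $\fF$-factorization $A\xto{e} RA\xto{m} 1=0$ of the terminal morphism, so $RA\in\M/0=\F$. If $\C(A,X)$ is contractible for every $X\in\F$, then in particular $\C(A,RA)$ is contractible; so the arrow $e\colon A\to RA$ is, up to homotopy, the unique point of $\C(A,RA)$. On the other hand the zero map $A\to 0\to RA$ is also a point of this space, hence $e$ is homotopic to the zero map. Now I must upgrade ``$e$ is nullhomotopic'' to ``$RA\simeq 0$'': since $e\in\E$ and $e$ is homotopic to a map that factors through $0$, I can use that $\E$ is closed under the relevant retracts/equivalences in $\C^{\simplex1}$ together with the fact that $e$ being nullhomotopic forces $RA$ to be a retract of $0$ — more cleanly, apply the reflection to the triangle and note $R(A\xto{e}RA)$ is an equivalence while it also factors through $R0\simeq 0$. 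Either way $RA\simeq 0$.

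Finally, $(3)\Rightarrow(1)$. Suppose $RA\simeq 0$. By Prop.\ \refbf{refective}, $\fF\in\tsc{pf}_\ter(\C)$ is reflective, so $\E$ is a $3$-for-$2$ class; in the $\fF$-factorization $A\xto{e}RA\xto{m}0$ we have $e\in\E$, and since $RA\simeq 0$ the map $m$ is an equivalence, hence $m\in\E$ (equivalences lie in both classes, Prop.\ \refbf{thereiso}). Therefore $me=(A\to 0)\in\E$ by closure under composition, i.e.\ $A\in 0/\E=\T$. For the last assertion, apply $(1)\Rightarrow(3)$ to $A=SX$: since $SX\in 0/\E=\T$ by definition of the coreflection, we get $R(SX)=0$.

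The main obstacle I anticipate is the step $(2)\Rightarrow(3)$: passing from the purely homotopical statement that $e\colon A\to RA$ is nullhomotopic to the honest equivalence $RA\simeq 0$ requires care about what ``contractible space of liftings'' buys us and an appeal to stability (a nullhomotopic map whose target is its own reflection must have trivial target). The other two implications are essentially formal manipulations of orthogonality and the $3$-for-$2$ closure of $\E$.
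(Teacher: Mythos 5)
Your proof is correct and follows essentially the same route as the paper: (1)$\Rightarrow$(2) is the identical orthogonality-square observation, and (2)$\Rightarrow$(3) begins, exactly as in the paper, by using $RA\in\F$ to see that the unit $e\colon A\to RA$ is nullhomotopic. The two arguments differ only in how that nullhomotopy is converted into $RA\simeq 0$: the paper plays the unit $e\in\E$ against $RA\to 0\in\M$ and uses uniqueness of the diagonal filler to conclude $1_{RA}\simeq 0$, whereas you apply $R$ and use idempotency of the reflection ($R(e)$ is an equivalence factoring through $R0\simeq 0$); both work, but observe that your first suggestion in that step --- that nullhomotopy of $e$ by itself makes $RA$ a retract of $0$ --- is not a valid inference (any zero map is nullhomotopic), so it is the ``cleaner'' alternative that carries the proof. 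One further micro-point: in (3)$\Rightarrow$(1), which the paper dismisses as evident, you land on $(A\to 0)\in\E$, while $\T=0/\E$ is defined by the \emph{initial} arrow $0\to A$; the translation between the two uses the 3-for-2 property of $\E$ applied to $0\to A\to 0$ (the argument of the later \lemname Lemma \refbf{satorlemma}), which you have available since you already invoked Prop.\ \refbf{refective}. With that one-line addition your proof, including the final assertion $RSX=0$, is complete.
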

\begin{proof}
We adapt to the stable $\infty$-categorical setting the proof found in \cite{RT}, which states a directly analogous result.

(i) $\Rightarrow$ (ii). If $A\in\T$, the space of solutions of the lifting problem \makeatother \[\xymatrix@R=5mm@C=5mm{0 \ar[r]\ar[d]& B \ar[d]\\
A \ar[r]& 0}\] \makeatletter must be contractible for any $B\in\F$, and yet it coincides with the whole $\C(A,B)$.

(ii) $\Rightarrow$ (iii). Factoring $A\to 0$ as $A\xto{\rho_A}RA\to 0$ we get that $\rho_A=0_A$; but now the diagram
\makeatother
\[
\xymatrix@R=4mm{
A \ar[r]\ar[d]_{u_A}& RA \ar[dd]^{u_{RA}}\\
0 \ar[d]& \\
RA \ar[r]_{u_{RA}}\ar@{=}[uur]& 0
}
\]
\makeatletter
commutes if we call $u_A\colon A\to 0$, $v_A\colon 0\to A$ the terminal and initial natural transformations respectively. Hence, 
\[
v_{RA}u_Av_Au_{RA} = v_{RA}u_{RA} = 1_{RA}
\]
from which we deduce that the identity of $RA$ is homotopic to the zero ma, so that $RA\cong 0$. The fact that (iii) implies (i) is evident, and this concludes the proof.
\end{proof}

There is, obviously, a dual result, stated as
\begin{lemma}
In the same notations as Definition \refbf{tortorfree}, $\M=S^{-1}(\iso)$ if and only if $\M$ is a 3-for-2 class in the sense of Definition \refbf{def:3for2}. In this case,
\[
\F = \{ A\in \C\mid SA\cong 0\}
\]
coincides with the collection of all $Y\in\C$ such that $\C(A, Y)$ is contractible for each $A\in \T$.
\end{lemma}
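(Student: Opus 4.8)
The plan is to obtain the entire statement from Lemma~\ref{orthoreflex} by passing to the opposite category, and then to carry out explicitly the (two) points that are not a pure transcription. Passing to $\C\mapsto\C^{\op}$ is legitimate because the pullout axiom of Definition~\ref{def:stablequasi} is auto-dual, so $\C^{\op}$ is again a stable $\infty$-category; under this duality a factorization system $(\E,\M)$ on $\C$ becomes the factorization system $(\M^{\op},\E^{\op})$ on $\C^{\op}$, the reflection $R$ and the coreflection $S$ are interchanged (as are $\T=0/\E$ and $\F=\M/0$), and the two halves \tsc{l32} and \tsc{r32} of the $3$-for-$2$ property swap places. Granting this bookkeeping, ``the dual result'' is literally: replay the proof of Lemma~\ref{orthoreflex} inside $\C^{\op}$ and translate back.

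First I would prove the biconditional $\M=S^{-1}(\iso)$ iff $\M$ is a $3$-for-$2$ class. The implication ``$\Rightarrow$'' is soft: $S^{-1}(\iso)$ is the preimage, along the functor $S$, of the class of equivalences of $\C$, which satisfies the full $2$-out-of-$3$ property, is closed under composition and contains the isomorphisms; these properties transfer to $S^{-1}(\iso)$, and in particular it is \tsc{l32}, \tsc{r32} and composition-closed, i.e.\ a $3$-for-$2$ class. For ``$\Leftarrow$'' I would first note that $\M\subseteq S^{-1}(\iso)$ holds \emph{unconditionally} for any $\fF\in\tsc{fs}(\C)$: writing the $\fF$-factorization of the initial morphism $0\to X$ as $0\to SX\xto{\sigma_X}X$ (so $0\to SX\in\E$ and $\sigma_X\in\M$), the naturality square of $S$ at $m\colon X\to Y$ in $\M$ reads $m\circ\sigma_X\simeq\sigma_Y\circ Sm$; since $\M$ is closed under composition and \tsc{l32} (Prop.~\ref{prop:clos}(ii)) this forces $Sm\in\M$, while comparing $0\to SX$ with $0\to SY$ and using that $\E$ is \tsc{r32} forces $Sm\in\E$, so $Sm\in\E\cap\M$ is an equivalence by Prop.~\ref{thereiso}. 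The reverse inclusion $S^{-1}(\iso)\subseteq\M$ is exactly where the hypothesis is spent: if $Sf$ is an equivalence then $f\circ\sigma_X\simeq\sigma_Y\circ Sf$ lies in $\M$ (composition of $\M$-maps), and \tsc{r32} of $\M$ now yields $f\in\M$. (This ``$\Leftarrow$'' is also the $\op$-dual of the characterization of reflective factorization systems, Prop.~\ref{refective} and \cite[Thm.~\textbf{2.3}]{CHK}.)

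Next I would read off the description of $\F$. Once $\M=S^{-1}(\iso)$ is available, the equality $\F=\{A\in\C\mid SA\cong 0\}$ is its instance at terminal morphisms: $A\in\F=\M/0$ iff $(A\to 0)\in\M$ iff $S(A\to 0)\colon SA\to 0$ is an equivalence iff $SA\cong 0$. That this set also equals $\{Y\mid \C(X,Y)\text{ is contractible for every }X\in\T\}$ is the $\op$-dual of the equivalence ``$A\in\T$ iff $\C(A,X)$ contractible for all $X\in\F$'' of Lemma~\ref{orthoreflex}: one inclusion is orthogonality $\E\perp\M$ applied to the square whose legs are $0\to X\in\E$ and $Y\to 0\in\M$, the space of whose diagonal fillers is $\C(X,Y)$; for the other, testing contractibility of $\C(X,Y)$ at $X=SY\in\T$ makes $\sigma_Y\colon SY\to Y$ null-homotopic, and the dual of the computation in Lemma~\ref{orthoreflex} then gives $1_{SY}\simeq 0$, i.e.\ $SY\cong 0$, i.e.\ $Y\in\F$. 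I expect no genuine obstacle in any of these computations; the only thing demanding care is the duality bookkeeping itself---checking that $\C^{\op}$ inherits stability, that $(\fF,R,S,\T,\F)$ and the \tsc{l32}/\tsc{r32} dichotomy transform as claimed, and that $R,S$ (defined only up to a choice of factorizations and of solutions to the ensuing lifting problems, which are contractible by orthogonality) are well defined up to equivalence, so that every statement above is choice-independent.
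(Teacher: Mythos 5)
Your proposal is correct and follows essentially the paper's own route: the paper gives no written argument for this lemma, presenting it simply as the ``obvious dual'' of Lemma \refbf{orthoreflex} (together with the surrounding reflectivity discussion, Prop.\ \refbf{refective} and Remark \refbf{biref}), and your proof is exactly that dualization with the details filled in. The explicit verifications you supply --- the unconditional inclusion $\M\subseteq S^{-1}(\iso)$ via the naturality square and Prop.\ \refbf{prop:clos}(ii), the use of \tsc{r32} of $\M$ for the converse, and the identification of $\F$ via orthogonality and the dual of the $1\simeq 0$ computation --- are all sound and are precisely what the paper leaves implicit.
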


\begin{remark}
Given the closure properties of the classes $\E, \M$, we can define natural functors $F\colon \C\to \F$ and $T\colon \C\to \T$ by taking suitable pullbacks and pushouts. Namely, we define $FX$ as the homotopy pullback
\makeatother
\[\xymatrix@R=5mm@C=5mm{FX \pb \ar[r]\ar[d]& SX\ar[d] \\ 0\ar[r] & X}\]
\makeatletter
and $TX$ as the homotopy pushout
\makeatother
\[\xymatrix@R=5mm@C=5mm{X  \ar[r]\ar[d]\po& 0\ar[d] \\ RX\ar[r] & TX}\]
\makeatletter
This construction is so natural it may show up in unexpected situations: for instance, \cite[Def. \textbf{3.9.12}]{schreiber} shows how the coefficient object $\flat_{\mathrm{dR}} A$ for de Rham cohomology with coefficients in an abelian group $A$ can be realized as the homotopy fiber of the coreflection $\flat A\to A$; this allows for defining de Rham cohomology in an arbitrary cohesive $\infty$-topos. 
\end{remark}
In what follows we will be mainly concerned with the ``other way'' to take fibers and cofibers, i.e., in the \emph{fiber} $KX$ of the \emph{reflection} $X\to RX$ and in the \emph{cofiber} $QX$ of the \emph{coreflection} $SX\to X$. When this procedure is well-behaved, we obtain the key notion of \emph{normality} of a torsion theory, which we introduce below.

As previewed at the end of our section \textbf{2}, a fairly general theory stems from the fundamental connection established in \cite{CHK}, and several specializations of a factorization system on $\C$ capture different kinds of reflective subcategories of $\C$ under this construction. We are particularly interested in the properties of the class of those factorization systems called \emph{normal} in \cite{CHK} and \cite{RT}. These can be defined intuitively as the torsion theories $\fF = (\E,\M)$ such that the diagram induced in ($\star$) is an ``exact sequence'', i.e., such that the diagram
\[
\xymatrix{
 SX \pp \ar[r] \ar[d]& X\ar[d]\\
 0\ar[r] & RX
}
\]
is a pullout. This seems to shed a light on \cite[Remark \textbf{7.8}]{CHK} and \cite[Remark \textbf{4.11}]{RT}, where the non-existence of a non-artificial example of a non-normal torsion theory is conjectured.  However this characterization is not immediate, and it admits a certain number of equivalent reformulations (see Prop \refbf{equivcondnorm}).
\begin{definition}\label{normal} We call \emph{left normal} a torsion theory $\fF=(\E,\M)$ on $\C$ such that the fiber $KX\to 0$ of a reflection morphism $X\to RX$ lies in $\E$, as in the diagram 
\makeatother
\[\xymatrix@R=5mm@C=5mm{KX \pb \ar[r]\ar[d]& X\ar[d] \\ 0\ar[r] & RX}\]
\makeatletter
In other words, the $\E$-morphisms arising as components of the unit $\eta\colon 1\Rightarrow R$ are stable under pullback along the initial $\M$-morphism $0\to RX$.
\end{definition}
\begin{remark}
This last sentence deserves a deeper analysis: by the very definition of $RX$ it is clear that $RX\to 0$ lies in $\M$; but more is true (and this seemingly innocuous result is a key step of most of the proofs we are going to present): since $\M$ enjoys the 3-for-2 property, and it contains all isomorphisms of $\C$, it follows immediately that an initial arrow $0\to A$ lies in $\M$ \emph{if and only if} the terminal arrow $A\to 0$ on the same object lies in $\M$. The same reasoning applied to $\E$ gives a rather peculiar ``specularity'' property for both classes $\E,\M$:
\begin{lemma}[\lemname Lemma]\label{satorlemma}
\makeatother
In a pointed quasicategory $\C$, an initial arrow $0\to A$ lies in a class $\E$ or $\M$ of a bireflective (see Remark \refbf{biref}) factorization system $\fF$ if and only if the terminal arrow $A\to 0$ lies in the same class.\footnote{The so-called \emph{Sator square}, first found in the ruins of Pompeii, consists of the $5\times 5$ matrix 
\[
\xymatrix@R=-1mm@C=-1mm{ 
\tsc{s} & \tsc{a} & \tsc{t} & \tsc{o} & \tsc{r}\\
 \tsc{a} & \tsc{r} & \tsc{e} & \tsc{p} & \tsc{o}\\
\tsc{t} & \tsc{e} & \tsc{n} & \tsc{e} & \tsc{t}\\
\tsc{o} & \tsc{p} & \tsc{e} & \tsc{r} & \tsc{a}\\
\tsc{r} & \tsc{o} & \tsc{t} & \tsc{a} & \tsc{s}
}
\]
where the letters are arranged in such a way that the same phrase (``\tsc{sator arepo tenet opera rotas}'', approximately ``Arepo, the farmer, drives carefully the plough'') appears when it is read top-to-bottom, bottom-to-top, left-to-right, and right-to-left.}
\makeatletter
\end{lemma}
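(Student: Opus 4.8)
The plan is to collapse the entire statement onto a single $2$-simplex. Fix an object $A$ of the pointed quasicategory $\C$ and write $v_A\colon 0\to A$ and $u_A\colon A\to 0$ for the initial and terminal arrows on $A$. First I would fill the inner horn $\Lambda^2_1$ determined by $v_A$ and $u_A$ to a $2$-simplex $\sigma$ of $\C$, with $d_2\sigma=v_A$, $d_0\sigma=u_A$, and middle face $h:=d_1\sigma\colon 0\to 0$. Since $0$ is a zero object the mapping space $\C(0,0)$ is contractible, so $h$ is an equivalence, hence $h\in\E\cap\M$ by Proposition \refbf{thereiso}.

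Next I would record what bireflectivity provides. By Proposition \refbf{refective} a reflective factorization system has \tsc{l32} left class, and by Proposition \refbf{prop:clos} the left class of \emph{any} factorization system is \tsc{r32}; dually (Remark \refbf{biref}) a coreflective factorization system has \tsc{r32} right class, while the right class is always \tsc{l32}. Hence for a bireflective $\fF=(\E,\M)$ both $\E$ and $\M$ are full $3$-for-$2$ classes in the sense of Definition \refbf{def:3for2}. Let $\mathcal S$ denote either one.

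The conclusion is then immediate from $\sigma$, whose edges are labelled $f=v_A$, $g=u_A$, $fg=h$. If $v_A\in\mathcal S$ then $f,fg\in\mathcal S$, and the \tsc{l32} property of $\mathcal S$ forces $g=u_A\in\mathcal S$; conversely if $u_A\in\mathcal S$ then $g,fg\in\mathcal S$, and \tsc{r32} forces $f=v_A\in\mathcal S$. Applying this with $\mathcal S=\E$ and with $\mathcal S=\M$ gives the statement.

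I do not anticipate any real obstacle: the argument is a two-line bookkeeping with Definition \refbf{def:3for2}. The only point that merits a word of justification is the passage identifying the composite $h$ with an equivalence — i.e. the observation that $\C(0,0)\simeq\ast$ — which then, via Proposition \refbf{thereiso}, places $h$ in both $\E$ and $\M$ simultaneously; everything else is formal.
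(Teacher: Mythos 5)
Your argument is correct and is essentially the paper's own: the composite $0\to A\to 0$ is an equivalence (hence in $\E\cap\M$), and since bireflectivity makes both classes full 3-for-2 classes, membership transfers between the initial and terminal arrow on $A$. The only quibble is that your labelling $f=v_A$, $g=u_A$, $fg=h$ reverses the paper's convention in Definition \refbf{def:3for2} (where $fg$ denotes the composite of $g$ followed by $f$), so the roles of \tsc{l32} and \tsc{r32} in your two directions are interchanged — harmless here, since you have already established both halves for each class.
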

\begin{notat}\label{liesin} This allows a certain play for a little abuse of notation, in that we can say that an object $A$ of $\C$ \emph{lies in} a 3-for-2 class $\mathcal K$ if its initial or terminal arrow lies in $\mathcal K$: in this sense, a left normal factorization system is an $\fF$ such that the fiber $KX$ of $X\to RX$ lies in $\E$, for every $X$ in $\C$.
\end{notat}
\end{remark}
Equivalent conditions for $\fF$ to be left normal are given in \cite[Thm. \textbf{4.10}]{RT} and \cite[\textbf{7.3}]{CHK}.
\begin{remark}
There is, obviously, a notion of \emph{right} normal factorization system: it is an $\fF$ such that the cofiber $QX$ of $SX\to X$ lies in $\M$, for every $X$ in $\C$. In the following we call simply \emph{normal}, or \emph{two-sided normal} a factorization system $\fF\in \tsc{fs}(\C)$ which is both left and right normal.
\end{remark}
Rather surprisingly, due to the self-dual setting we are working in, we are able to prove that
\begin{quote}
in a stable $\infty$-category the three notions of simple, semiexact and normal torsion theory collapse to be three equivalent conditions.
\end{quote}
More precisely we have
\begin{proposition}\label{equivcondnorm}
For every object $X$, consider the following diagram in $\C$, where every square is a pullout. 
\[
\xymatrix{
SX\oplus RX[-1]\pp \ar[r]\ar[d]_{m''}& SX \pp \ar[r]\ar[d]^{\sigma_X} & 0\ar[d] \\
KX \pp \ar[r]\ar[d]& X \pp \ar[r]\ar[d]^{\rho_X} & QX\ar[d]^{e''} \\
0 \ar[r]& RX\ar[r] & SX[1]\oplus RX 
}
\]
Then the following conditions are equivalent for a bireflective factorization system $\fF=(\E,\M)$ on $\C$:
\begin{enumerate}
\item $\fF$ is left normal;
\item $\fF$ is right normal;
\item $\fF$ is normal;
\item $RX\simeq QX$; 
\item $SX = KX$; 
\item $SX\to X\to RX$ is a fiber sequence.
\end{enumerate}
\end{proposition}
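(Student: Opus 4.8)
The plan is to prove the web of equivalences
\[
(1)\Longleftrightarrow(5)\Longleftrightarrow(6)\Longleftrightarrow(4)\Longleftrightarrow(2),\qquad (3)\Longleftrightarrow(1)\wedge(2),
\]
so that $(3)$ falls out once $(1)\Leftrightarrow(2)$ is known. I would lean throughout on three facts already at our disposal. First, $SX\in\T=0/\E$ and $RX\in\F=\M/0$ for every object $X$, since $S$ and $R$ are precisely the co/reflection onto the torsion/torsion-free class. Second, the ``complex'' $(\star)$, i.e.\ $SX\xto{\sigma_X}X\xto{\rho_X}RX$, underlies a genuine homotopy-commutative square $\square\to\C$: the composite $\rho_X\sigma_X$ is a point of $\C(SX,RX)$, which is contractible by Lemma~\refbf{orthoreflex}, hence canonically nullhomotopic. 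Third, the \lemname Lemma~\refbf{satorlemma}, which (the theory being bireflective) lets us rephrase ``$KX$ lies in $\E$'' as ``$KX\in\T$'' and ``$QX$ lies in $\M$'' as ``$QX\in\F$''.

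I would first settle $(4)\Leftrightarrow(5)\Leftrightarrow(6)$. Since $(\star)$ is a commutative square with a zero vertex, its comparison into the associated pullback is exactly the canonical map $SX\to KX=\fib(\rho_X)$, and its comparison out of the associated pushout is exactly the canonical map $QX=\cofib(\sigma_X)\to RX$. Thus condition $(5)$ says $(\star)$ is cartesian, condition $(4)$ says $(\star)$ is cocartesian, and $(6)$ says $(\star)$ is a pullout; by the pullout axiom of Definition~\refbf{def:stablequasi} these are equivalent. This is the single point where stability proper, rather than just the fundamental connection, is invoked.

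Next, $(5)\Leftrightarrow(1)$. For $(5)\Rightarrow(1)$: if $SX\to KX$ is an equivalence then $KX\simeq SX\in\T$, so $0\to KX\in\E$, hence $KX\to 0\in\E$ by the \lemname Lemma, i.e.\ $\fF$ is left normal. The content is in $(1)\Rightarrow(5)$: assuming left normality we have $KX\in\T$, and for any $T\in\T$ we apply $\C(T,-)$ to the fiber sequence $KX\to X\xto{\rho_X}RX$; since $RX\in\F$, Lemma~\refbf{orthoreflex} gives $\C(T,RX)\simeq *$, whence $\C(T,KX)\xto{\sim}\C(T,X)$. Composing with the comparison $j\colon SX\to KX$ (which exists by the universal property of the fiber, using the nullhomotopy in $(\star)$) produces a triangle whose long edge $\C(T,SX)\to\C(T,X)$ is $(\sigma_X)_*$, an equivalence because $S$ is the coreflection onto $\T$; hence $j_*\colon\C(T,SX)\to\C(T,KX)$ is an equivalence for every $T\in\T$. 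Taking $T=SX$ and $T=KX$ — the latter allowed precisely by $(1)$ — and applying the Yoneda lemma inside the full subcategory $\T$ shows $j$ is an equivalence, which is $(5)$. Running the dual argument, with $\C(-,Y)$ applied to the cofiber sequence $SX\xto{\sigma_X}X\to QX$ for $Y\in\F$ and using $QX\in\F$, gives $(2)\Leftrightarrow(4)$.

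Finally $(3)\Leftrightarrow(1)\wedge(2)$ is just the definition of two-sided normality, so combining with $(1)\Leftrightarrow(2)$ from the chain above closes the loop and all six statements are equivalent. I expect the genuine obstacle to be the implication $(1)\Rightarrow(5)$ and its dual: one must exploit that the orthogonality of a normal torsion theory furnishes \emph{contractible} mapping spaces, not merely pointed ones — it is this, together with the pullout axiom, that forces the collapse, whereas in a merely pointed $1$-category the analogues of simple, semiexact and normal genuinely differ. The remaining care is $\infty$-categorical bookkeeping: a coherent choice of the comparison $j$, the nullhomotopy sustaining $(\star)$, and detecting equivalences between objects of $\T$ by mapping out of $\T$.
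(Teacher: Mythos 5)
Your proof is correct, but it takes a genuinely different route from the paper's. The paper first proves $(1)\Leftrightarrow(2)\Leftrightarrow(3)$ directly, by working with the large diagram in the statement: left normality forces the arrow $QX\to SX[1]\oplus RX$ into $\E$ (as a pushout of an $\E$-arrow), and then $\fF$-factoring and using $RS=0$ gives $RQX\simeq RX$, hence $QX\in\F$, i.e.\ right normality; afterwards it obtains $(4)$, $(5)$, $(6)$ by showing that the canonical comparison $QX\to RX$ lies in $\E\cap\M$ via the 3-for-2 property of both classes and Prop.~\refbf{thereiso}. You instead isolate the stability input first, proving $(4)\Leftrightarrow(5)\Leftrightarrow(6)$ by reading all three conditions as cocartesian-ness, cartesian-ness, and pullout-ness of the square $(\star)$ and invoking the pullout axiom of Definition~\refbf{def:stablequasi}; you then connect these to normality by a representable argument: mapping out of $T\in\T$ into the fiber sequence $KX\to X\to RX$, using the contractibility of $\C(T,RX)$ from Lemma~\refbf{orthoreflex}, the coreflection adjunction for $(\sigma_X)_*$, the \lemname Lemma~\refbf{satorlemma} to identify left normality with $KX\in\T$, and Yoneda inside the full subcategory $\T$ (dually for $(2)\Leftrightarrow(4)$), so that $(1)\Leftrightarrow(2)$ falls out of the chain rather than being proved head-on. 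Both arguments are sound; the paper's stays entirely within the arrow-class calculus of factorization systems (and explains the role of the big diagram displayed in the statement), while yours is leaner, never needs that diagram or the computation $RQX\simeq RX$, and makes visibly precise the two places where the stable setting is indispensable: the pullout axiom and the contractibility (not mere nonemptiness) of the orthogonality mapping spaces.
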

\begin{proof}
We start by proving that the first three conditions are equivalent. If we assume left normality, then the arrow $\var{QX}{SX[1]\oplus RX}$ lies in $\E$, since it results as a pushout of an arrow in $\E$. So we can consider 
\[
\xymatrix{
QX \ar[rr]^{e'}\ar[d]_{e''} & & RQX\ar[d]^{m'} \\
SX[1]\oplus RX \ar[r]_-{e} & RX = R(SX[1]\oplus RX)\ar[ur]_\sim \ar[r]_-{m} & 0
}
\]
$\fF$-factoring the morphisms involved: $R(SX[1]\oplus RX)=RRX=RX$ since $RS=0$. Thus $RQX\cong RX$, which entails $\var{0}{QX}\in\M$, which entails right normality. A dual proof gives that $(2)\Rightarrow (1)$, thus right normality equals left normality and hence two-sided normality.
Now it is obvious that $(6)$ is equivalent to $(4)$ and $(5)$ together; the non-trivial part of the proof consists of the implications $(1)\Rightarrow (4)$, and dually $(2)\Rightarrow (5)$. 

Once noticed this, start with the diagram
\makeatother
\[
\xymatrix@R=5mm@C=5mm{
SX \ar[rr]^m\ar[dd]&& X \ar[dd]^e\ar[dl] \\
& QX \ar@{.>}[dr]  & \\
0 \ar[rr]_m \ar[ur]&& RX
}
\]
\makeatletter
and consider the canonical arrow $QX\to RX$ obtained by universal property: the arrow $\var{0}{RX}$ lies in $\M$ (this is a general fact); left normality now entails that $\var{0}{QX}\in \M$, so that $\var{QX}{RX}$ lies in $\M$ too by reflectivity.

A similar argument shows that since both $\var{X}{QX}, \var{X}{RX}$ lie in $\E$, $\var{QX}{RX}$ lies in $\E$ too by reflectivity. This entails that $\var{QX}{RX}$ is an equivalence. Conversely, if we start supposing that $QX\cong RX$, then we have (left) normality. This concludes the proof, since in the end we are left with the equality $(4)\iff (5)$.
\end{proof}
As previewed before, the three notions of simplicity, semiexactness and normality collapse in a single notion in the stable setting:
\begin{proposition}\label{simplenormalexact}
A torsion theory $\fF$ is left normal if and only it is semi-left-exact in the  sense of \cite[\textbf{4.3.i}]{CHK}, namely if and only if in the pullout square
\[
\xymatrix{
E \pp \ar[r]\ar[d]_{e'}& X\ar[d]^{\rho_X\in\E} \\
Q \ar[r]_m & RX
}
\]
the arrow $e'$ lies in $\E$. Dually, a factorization system $\fF$ is right normal if and only it is semi-right-exact in the  sense of (the dual of) \cite[\textbf{4.3.i}]{CHK}.
\end{proposition}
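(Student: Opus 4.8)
The plan is to read "semi-left-exact" through the pullout square displayed in the statement (this is condition \cite[\textbf{4.3.i}]{CHK}, which by \cite[Thm.~\textbf{4.3}]{CHK} is equivalent to the closure property of Definition \refbf{slex}) and to verify the two implications separately, the whole argument resting on the pasting laws together with the pullout axiom.

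The implication \emph{semi-left-exact $\Rightarrow$ left normal} is the easy one: I would specialize the displayed square to $Q=0$. Its bottom edge becomes the initial morphism $0\to RX$, which lies in $\M$ because $RX\to 0$ does (by construction of the reflection) and $\M$ is a $3$-for-$2$ class, so the \lemname Lemma \refbf{satorlemma} applies. The pullback of $\rho_X\in\E$ along $0\to RX$ is by definition the fiber $KX\to 0$ of the unit, and semi-left-exactness forces it into $\E$; this is precisely left normality (Definition \refbf{normal}, read via Notation \refbf{liesin}).

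For the converse, \emph{left normal $\Rightarrow$ semi-left-exact}, I would fix $X$ and a morphism $m\colon Q\to RX$ in $\M$, and form $E=X\times_{RX}Q$. Since $0$ is a zero object, $0\to RX$ factors as $0\to Q\xrightarrow{m}RX$, so I can stack the fiber square of $\rho_X$ on top of the square defining $E$:
\[
\xymatrix{
KX \pb \ar[r]\ar[d] & E \pb \ar[r]\ar[d]^{e'} & X \ar[d]^{\rho_X} \\
0 \ar[r] & Q \ar[r]_m & RX
}
\]
The right square is a pullback by construction and the outer rectangle is the fiber square of $\rho_X$, so the pasting law for cartesian squares (the \tsc{r32} property recalled after Definition \refbf{def:stablequasi}) makes the left square a pullback as well; hence $KX=\fib(e')$. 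Now the pullout axiom intervenes: in $\C$ the left square is simultaneously a pushout, so $e'$ is the cobase change of the terminal morphism $KX\to 0$ along $KX\to E$. Left normality gives $KX\to 0\in\E$, and $\E={}^\perp\M$ is saturated (Proposition \refbf{satu}), hence closed under cobase change; therefore $e'\in\E$. The right-normal/semi-right-exact half is obtained by dualizing verbatim (reflections, units and fibers replaced by coreflections, counits and cofibers).

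I do not expect a genuine obstacle \emph{inside} the stable setting --- this is the by-now familiar effect of the pullout axiom, which makes formal what is a delicate base-change argument in the $1$-categorical world. The two points that call for a word rather than a computation are: the passage from pullback to pushout for the left square above, which is exactly the pullout axiom applied to a square one of whose vertices is $0$; and the identification of "pullbacks of the units $\rho_X$ along $\M$ lie in $\E$" with full semi-left-exactness in the sense of Definition \refbf{slex} --- this is the content imported from \cite[Thm.~\textbf{4.3}]{CHK}, reflecting that for a reflective factorization system membership in $\E$ is controlled by the reflector. It is the second of these that I regard as the real crux, everything else being bookkeeping with the pullout axiom and the \lemname Lemma.
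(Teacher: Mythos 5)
Your argument is correct and is essentially the proof the paper gives: the same two-square diagram with $KX$, $E$, $X$ over $0$, $Q$, $RX$, the easy direction by pulling back the unit along an $\M$-arrow into $0$ (using the \lemname Lemma), and the hard direction by observing via the pasting law and the pullout axiom that the left square is a pullout, so that $e'$ is a cobase change of $KX\to 0\in\E$ and hence lies in $\E$ by saturation. Your only additions are bookkeeping the paper leaves implicit (why the left square is a pullout, and the remark that the displayed square is the form of semi-left-exactness taken from \cite[Thm.~\textbf{4.3}]{CHK}), so there is nothing genuinely different to compare.
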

\begin{proof}
Consider the diagram
\[
\xymatrix{
KX \pp \ar[r]\ar[d] & E \pp \ar[r]\ar[d]_{e'} & X \ar[d]^e\\
0 \ar[r]& Q\ar[r]_m & RX
}
\]
where the arrow $Q\to RX$ belongs to $\M$. On the one side it is obvious that if $\fF$ is semi-left-exact, then it is normal (just pull back two times $e$ along $\M$-arrows). On the other hand, the converse implication relies on the pullout axiom: if $\fF$ is normal, then $KX$ lies in $\E$; but now since the left square is a pullout, the arrow $\var{E}{Q}$ belongs to $\E$ too, giving semi-left-exactness.
\end{proof}

\begin{remark}
The three notions coincide since ``classically'' we have
\[
\tsc{slex}\to \tsc{simple}\to \tsc{normal},
\]
whereas in our setting the chain of implication proceeds one step further and closes the circle:
\[
\tsc{slex}\to \tsc{simple}\to \tsc{normal}\xto{\star}\tsc{slex}.
\]
This gives a pleasant consequence:
\begin{quote}
In a stable $\infty$-category the $\fF$-factorization of $f\colon A\to B$ with respect to a normal torsion theory is always
\[
A \to RA\times_{RB}B \to B,
\]
or equivalently (see Prop. \refbf{facto})
\[
A \to SB\amalg_{SA} A \to B.
\]
\end{quote}
\end{remark}
We now would like to exploit the theory laid down so far to prove the fundamental result of this work, namely a characterization of $t$-structures as normal torsion theories.
\begin{theorem}\label{thm:rosetta}
Let $\C$ be a stable $\infty$-category. There is a bijective correspondence (in fact, an antitone equivalence of posets) between the class of normal torsion theories $\fF =(\E,\M)$ on $\C$ (in the sense of Definition \refbf{normal}) and the class of $t$-structures on $\C$ (in the sense of Definition \refbf{tistru}).
\end{theorem}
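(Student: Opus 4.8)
The plan is to construct the two maps of the claimed bijection and then verify they are mutually inverse, working entirely in the stable $\infty$-category $\C$ where all the self-duality and the pullout axiom are available.

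\medskip

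First I would describe the map from normal torsion theories to $t$-structures. Given a normal torsion theory $\fF=(\E,\M)$, set $\C_{\ge 0} := \T = 0/\E$ and $\C_{<0} := \F = \M/0$, using the Sator Lemma \refbf{satorlemma} to move freely between initial and terminal arrows. Property (i) of Definition \refbf{tistru} — orthogonality of $\C_{\ge 0}$ against $\C_{<0}$ — follows from Lemma \refbf{orthoreflex}, (i)$\Leftrightarrow$(ii). Property (iii) — the existence of a fiber sequence $X_{\ge 0}\to X\to X_{<0}$ — is exactly the statement, guaranteed by normality via Proposition \refbf{equivcondnorm}(6), that $SX\to X\to RX$ is a fiber sequence; so one takes $X_{\ge 0}=SX$ and $X_{<0}=RX$, noting $SX\in\T$ and $RX\in\F$ by construction. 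Property (ii) — that $\C_{\ge 0}[1]\subseteq \C_{\ge 0}$ and $\C_{<0}[-1]\subseteq\C_{<0}$ — is where I would use that $\E$ and $\M$ are saturated (Proposition \refbf{satu} and its dual): $\T$ is closed under the colimits defining suspension and $\F$ under the limits defining looping. Concretely, if $A\in\T$ then $A\to 0$ lies in $\E$, hence so does its pushout $0\to A[1]$, i.e. $A[1]\in\T$; dually for $\F$ and $[-1]$. Finally I would check this assignment is antitone: a larger $\M$ means a larger reflective subcategory $\M/0=\C_{<0}$, matching the order conventions on $\tsc{ts}(\C)$ and $\tsc{fs}(\C)$ recorded in Remark \refbf{slicing} and Definition \refbf{def:effe-esse}.

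\medskip

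Next, the reverse map. Given a $t$-structure $\tee=(\C_{\ge 0},\C_{<0})$, define $\M := \{m : (\,\text{fib}(m)\in\C_{\ge 0}[1]\,)\}$ — or more transparently, take $\M$ to be the class of morphisms right-orthogonal to every arrow $0\to A$ with $A\in\C_{\ge 0}$, and $\E := {}^\perp\M$, so that $(\E,\M)$ is automatically a prefactorization. To see it is a genuine factorization system I would produce the factorization of an arbitrary $f : X\to Y$ directly from the truncation functors: using that every object sits in a fiber sequence $\tau_{\ge 0}X \to X \to \tau_{<0}X$ (property (iii)), and that $\tau_{<0}$ is a reflection onto $\C_{<0}$, form $X \to \tau_{<0}X\times_{\tau_{<0}Y}Y \to Y$ exactly as in the "simple" factorization of Definition \refbf{simple}; orthogonality of $\C_{\ge 0}$ and $\C_{<0}$ (property (i)) guarantees the two factors land in $\E$ and $\M$ respectively. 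Both classes are then 3-for-2: $\E$ is \tsc{r32} and $\M$ is \tsc{l32} by Proposition \refbf{prop:clos}(ii), while the missing halves follow from Proposition \refbf{refective} and its dual, since $\C_{<0}=\M/0$ is reflective and $\C_{\ge 0}=0/\E$ is coreflective by the remark after Definition \refbf{tortorfree} together with property (iii). Normality is then immediate from Proposition \refbf{equivcondnorm}: the fiber sequence $\tau_{\ge 0}X\to X\to\tau_{<0}X$ is precisely condition (6), so $\fF$ is (two-sided) normal.

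\medskip

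Finally I would check the two constructions are mutually inverse. Starting from $\fF$, passing to $\tee=(0/\E,\M/0)$ and back recovers the prefactorization right-generated by $\hom(\M/0)$, which by Proposition \refbf{connectio} and the fact that a normal torsion theory is reflective (Remark after Proposition \refbf{connectio}, or Proposition \refbf{refective}) is $\fF$ itself; here one uses that the functors $S,R$ built from $\fF$-factorizing initial/terminal arrows agree with the truncations $\tau_{\ge 0},\tau_{<0}$ of the associated $t$-structure, which is exactly what the fiber-sequence characterization gives. Conversely, starting from $\tee$, the torsion/torsion-free classes of the torsion theory it determines are $0/\E = \C_{\ge 0}$ and $\M/0 = \C_{<0}$ by the explicit factorization above, so we return to $\tee$. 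The main obstacle I anticipate is the verification that the prefactorization $(\E,\M)$ attached to a $t$-structure actually factors \emph{every} morphism (not merely terminal ones) with factors in the orthogonal classes — i.e. that the "simple" square $X\to\tau_{<0}X\times_{\tau_{<0}Y}Y\to Y$ has left leg genuinely in ${}^\perp\M$ and not just in some larger class; this is where the pullout axiom of the stable setting does the real work, by ensuring the fiber of the left leg agrees with the fiber of $\eta_X$ up to the relevant shift and hence lies in $\C_{\ge 0}$, and it is precisely the step that fails in the merely 1-categorical or triangulated world. The rest is bookkeeping with the Sator Lemma and the saturation properties already established.
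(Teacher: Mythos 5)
Your forward direction (normal torsion theory $\Rightarrow$ $t$-structure) matches the paper's argument and is fine. The genuine problems are in the reverse direction, which is where the paper spends almost all of its effort. First, your two proposed descriptions of $\M$ are not equivalent, and the first one is simply wrong: for the class that must come out of the construction (the paper's $\M(\tee)=\tau_{\geq 0}^{-1}(\iso)$) one has e.g. $A[-1]\to 0\in\M$ for $A\in\C^\heart$, whose fiber lies in no positive shift of $\C_{\geq 0}$; fibers of $\M$-maps do not detect membership in $\M$ (nor do fibers detect $\E$: $0\to A$ with $A\in\C^\heart$ lies in $\E$ but has fiber $A[-1]\notin\C_{\geq 0}$). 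More seriously, your justification of the crucial step --- that the left leg $e_f\colon X\to \tau_{<0}X\times_{\tau_{<0}Y}Y$ lies in $\E={}^\perp\M$ --- is incorrect as stated: by the pullout square of Lemma \refbf{another-pullout}, $\fib(e_f)\simeq\fib(\tau_{\geq 0}f)$, which is \emph{not} the fiber of $\eta_X$ up to shift and need \emph{not} lie in $\C_{\geq 0}$ (take $f\colon 0\to Y$ with $Y\in\C^\heart$: then $e_f=f$ and $\fib(e_f)=Y[-1]$). So the criterion you propose fails even on the generating arrows. The argument can be repaired --- e.g.\ use that square to exhibit $e_f$ as a cobase change of $\tau_{\geq 0}(f)$, a map between objects of $\C_{\geq 0}$, and show by a cancellation/orthogonality computation that every such map lies in ${}^\perp\M$ --- but that is precisely the content the paper supplies via Lemma \refbf{another-pullout}, the lemma on $\tau_{<0}(e_f)$, $\tau_{\geq 0}(m_f)$, the identification $\M_F=\tsc{Cart}(\tau_{<0})=\tau_{\geq 0}^{-1}(\iso)$ (which needs the stable five-lemma), and the Korostenski--Tholen Eilenberg--Moore criterion; none of it is ``bookkeeping''.

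Second, your derivation of the 3-for-2 property for both classes is a non sequitur bordering on circularity. Proposition \refbf{refective} says that $\E$ is 3-for-2 iff the factorization system is \emph{reflective}, i.e.\ fixed by the comonad $\Phi\Psi$ of the fundamental connection; the mere fact that $\M/0$ is a reflective subcategory holds for \emph{every} factorization system factoring terminal morphisms and does not give reflectivity of $\fF$ in that sense. Likewise the ``remark after Definition \refbf{tortorfree}'' you cite already presupposes that both classes are 3-for-2, which is what you are trying to prove. The paper avoids this entirely: once $\E$ and $\M$ are identified with $\tau_{<0}^{-1}(\iso)$ and $\tau_{\geq 0}^{-1}(\iso)$, both classes are preimages of equivalences under functors and the 3-for-2 property is automatic. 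In your setup ($\M$ defined by orthogonality against $\{0\to A\,:\,A\in\C_{\geq 0}\}$, $\E={}^\perp\M$) you would have to prove such an identification anyway, so the gap is not cosmetic: without it neither the 3-for-2 property nor the bijectivity argument (which again invokes reflectivity/coreflectivity of the constructed $\fF$) is established.
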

The proof of this result will occupy the rest of the section: to simplify the discussion we will deduce it as a consequence of a number of separate statements.

We first establish the two correspondences between factorization systems and $t$-structures on $\C$. We are obviously led to exploit the fundamental connection (see \S\refbf{fundconn}): given a normal, bireflective factorization system $\fF=(\E,\M)$ on $\C$ we define the two classes $(\C_{\ge 0}(\fF),\C_{<0}(\fF))$ of the $t$-structure $\tee(\fF)$ to be the torsion and torsion-free classes $(0/\E, \M/0)$  associated to $\fF$, in the sense of Definition \refbf{tortorfree}. On the other hand, given a $t$-structure $\tee=(\C_{\ge 0},\C_{<0})$ in the sense of Definition \refbf{tistru}, we have to define classes $\fF(\tee) = (\E(\tee), \M(\tee))$ which form a factorization system. We set:
\[
\E(\tee)=\{f\in \C^{\simplex{1}} \text{ such that $\tau_{<0}(f)$ is an equivalence}\};
\]
\[
\M(\tee)=\{f\in \C^{\simplex{1}} \text{ such that $\tau_{\geq0}(f)$ is an equivalence}\}.
\]

\begin{proposition}
The pair $\tee(\fF)$ is a $t$-structure on $\C$ in the sense of Definition \refbf{tistru}.
\end{proposition}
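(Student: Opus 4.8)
The plan is to verify the three defining conditions of Definition \refbf{tistru} for $\tee(\fF)=(\C_{\ge 0}(\fF),\C_{<0}(\fF))=(0/\E,\M/0)$ one at a time, leaning entirely on the machinery set up in Section \textbf{2} and earlier in this section. Condition (i), orthogonality, needs essentially no new work: it is verbatim the implication (1)$\Rightarrow$(2) of Lemma \refbf{orthoreflex}, which says that for $A\in\T=0/\E$ the mapping space $\C(A,Y)$ is contractible for every $Y\in\F=\M/0$. (Unwound: the square with left edge $0\to A\in\E$ and right edge $Y\to 0\in\M$ has a contractible space of diagonal fillers by $\E\perp\M$, and since $0$ is a zero object that space is exactly $\C(A,Y)$.)

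For condition (ii) I would show that $0/\E$ is stable under the suspension $(-)[1]$ and $\M/0$ under the looping $(-)[-1]$. Given $A\in 0/\E$, the \lemname Lemma \refbf{satorlemma} upgrades $0\to A\in\E$ to $A\to 0\in\E$; since $A[1]=\Sigma A$ is the pushout of $A\to 0$ along $A\to 0$, the structure arrow $0\to A[1]$ is a cobase change of an $\E$-arrow, and $\E={}^\perp\M$ is saturated (Proposition \refbf{satu}), hence closed under cobase change, so $0\to A[1]\in\E$, i.e. $A[1]\in 0/\E$; thus $\C_{\ge 1}(\fF)\subseteq\C_{\ge 0}(\fF)$. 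Running the dual argument — $B[-1]=\Omega B$ is the pullback of $0\to B$ along $0\to B$, and $\M=\E^\perp$ is co-saturated by the dual of Proposition \refbf{satu}, hence closed under base change — yields $\C_{<-1}(\fF)\subseteq\C_{<0}(\fF)$.

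For condition (iii), fix $X\in\C$ and $\fF$-factor its terminal morphism as $X\xto{\rho_X}RX\xto{m}0$ with $\rho_X\in\E$ and $m\in\M$; then $RX\in\M/0=\C_{<0}(\fF)$ tautologically. Form the fiber sequence $KX\to X\to RX$ with $KX=\fib(\rho_X)$. This is the single point where the normality hypothesis is used: left normality of $\fF$ (Definition \refbf{normal}), equivalently Proposition \refbf{equivcondnorm}, says precisely that $KX$ lies in $\E$, i.e. $KX\in 0/\E=\C_{\ge 0}(\fF)$ (and by Proposition \refbf{equivcondnorm} this $KX$ is the coreflection $SX$, so the sequence is $SX\to X\to RX$). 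Taking $X_{\ge 0}:=KX$ and $X_{<0}:=RX$ then gives the required fiber sequence $X_{\ge 0}\to X\to X_{<0}$ with endpoints in the prescribed subcategories, completing the verification.

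I do not anticipate a genuine obstacle: the inputs — Lemma \refbf{orthoreflex}, Proposition \refbf{satu}, Proposition \refbf{equivcondnorm}, and the \lemname Lemma \refbf{satorlemma} — already carry all the weight, and the proposition amounts to assembling them. The one spot demanding attention is the bookkeeping in condition (ii): keeping straight, at each step, whether the relevant class must contain the initial arrow $0\to(-)$ or the terminal arrow $(-)\to 0$, and exhibiting $\Sigma$ (resp.\ $\Omega$) as precisely the cobase change (resp.\ base change) to which saturation of $\E$ (resp.\ co-saturation of $\M$) applies.
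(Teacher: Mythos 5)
Your proof is correct and follows essentially the same route as the paper: orthogonality is immediate from $\E\perp\M$, and the fiber sequence is obtained by $\fF$-factoring the terminal morphism of $X$ and invoking (left) normality to place the fiber $KX$ in $\C_{\ge 0}(\fF)$, exactly as in the text. The only cosmetic difference is in condition (ii), where the paper cites Proposition \refbf{prop:clos}(i) (closure of $\E$ under colimits in $\C^{\simplex{1}}$, hence under the pushout defining the shift) whereas you use the \lemname Lemma \refbf{satorlemma} together with (co)saturation from Proposition \refbf{satu}; both justifications are legitimate.
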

\begin{proof}
The orthogonality request is immediate by definition of the two classes. As for the closure under positive/negative shifts, $(A\to B)\in\E$ entails that $(A[1]\to B[1])\in\E$ since left classes in factorization systems are closed under (homotopy) colimits in the arrow category (see Prop. \refbf{prop:clos}) and in particular under the homotopy pushout defining the shift $A\mapsto A[1]$ on $\C$. This justifies the chain of implications
\[
X\in \cate C_{\ge 0}(\fF)  \iff \bsmat[[]0\\ \downarrow \\ X\esmat[]]\in\E 
 \Longrightarrow \bsmat[[] 0\\ \downarrow \\ X[1] \esmat[]]\in\E \iff X[1]\in \cate C_{\ge 0}(\fF) .
\]
The case of $\C_{<0}$ is completely dual: since $\M$ admits any limit, $\bsmat[[] X\\ \downarrow \\ 0 \esmat[]]\in\M$ implies that $\bsmat[[] X[-1] \\ \downarrow \\ 0 \esmat[]]\in\M$, so that $\C_{<0}(\fF)[-1]\subset \C_{<0}(\fF)$.

To see that any object $X\in \C$ fits into a fiber sequence 
$
X_{\geq 0}\to X \to X_{<0},
$
with $X_{\geq 0}$ in $\C_{\geq 0}(\fF)$ and $X_{< 0}$ in $\C_{< 0}(\fF)$, 
it suffices to $\fF$-factor the terminal morphism of $X$ obtaining a diagram like
\makeatother \[
\xymatrix{
X \ar[r]^e & RX \ar[r]^m & 0
}
\]\makeatletter
and then to take the fiber of $e$,
\makeatother \[
\xymatrix{
KX\ar[r]\ar[d]\pp & X\ar[d]  \\
0 \ar[r] & RX  
}
\]\makeatletter
Set $X_{\geq 0}=KX$ and $X_{<0}=RX$. Then $X_{<0}\in \C_{<0}(\fF)$ by construction and $X_{\geq 0}\in \C_{\geq 0}(\fF)$ by normality.
\end{proof}
In order to prove that the pair of markings $\fF(\tee)$ is a factorization system on the stable $\infty$-category $\C$, we use the data of the $t$-structure to produce a functorial factorization of morphisms.
To do this, recall that by Definition \refbf{tistru}\textbf{.(iii)} every object $X\in\C$ fits into a fiber sequence (a ``distinguished triangle'') $X_{\ge 0} \to X\to X_{<0}\to X_{\ge 0}[1]$. So, given $f\colon X\to Y$ we can build the diagram\footnote{We thank Eric Wofsey for having suggested us to consider this diagram \cite{Wofsey}.}
\makeatother
\[\tag{$\star\star$}
\xymatrix{
X_{\ge 0}\ar[r]\ar[d]_{\tau_{\ge 0}(f)} & X\ar@/^1pc/@[gray][dd]|\hole \ar[r]\ar@{.>}[d]_{e_f}& X_{<0}\ar[r]\ar@{=}[d] & X_{\ge 0}[1]\ar[d]^{\tau_{\ge 0}(f)[1]} \\
Y_{\ge 0} \ar@{.>}[r]\ar@{=}[d]& C \pp \ar[r]\ar[d]_{m_f}& X_{<0}\ar[r]\ar[d]^{\tau_{<0}(f)} & Y_{\ge 0}[1] \ar@{=}[d]\\
Y_{\ge 0} \ar[r]& Y\ar[r] & Y_{<0}\ar[r] & Y_{\ge 0}[1] 
}
\]
\makeatletter
where the decorated square is a pullout (so $C\cong X_{<0}\times_{Y_{<0}}Y$, a characterization which, alone, should be reminiscent of simplicity for the would-be factorization of $f$: cleaning up the above diagram a bit we can recognize precisely the same diagram of Definition \refbf{simple}, up to the identifications $\tau_{<0}=R$ and $\tau_{\ge 0}=S$), and hence the dotted arrows are determined by the obvious universal property. Note that all the three rows in the above diagram are fiber sequences. Mapping $f$ to the pair $(e_f,m_f)$ is a factorization functor $F\colon \C^{\simplex{1}}\to \C$ (a tedious but easy check) in the sense of \cite{Korostenski199357} (see also our \S \refbf{korostenski}). Next, we invoke a rather easy but subtle result contained in \cite{Korostenski199357}, which in a nutshell says that a factorization system on a category $\C$ is determined by a functorial factorization $F$ 
such that $m_{e_f}$, $e_{m_f}$ are invertible. Functors satisfying this property are called \emph{Eilenberg-Moore factorization functors} in \cite{Korostenski199357}.\footnote{These are not the weakest assumptions to ensure that $\fF(F)=(\E_F, \M_F)\in \tsc{fs}(\C)$: see the final remark in \cite{Korostenski199357} and \cite[\textbf{1.3}]{janelidze1999functorial}.} Namely, if one defines
\[
\E_F = \{h\in \C^{\simplex{1}}\mid m_h \text{ is invertible}\}
\]
and 
\[
\M_F = \{h\in \C^{\simplex{1}}\mid e_h \text{ is invertible}\},
\]
then $(\E_F,\M_F)$ is a factorization system as soon as $e_f\in \E_F$ and $m_f\in \M_F$ for any morphism $f$ in $\C$.

\begin{remark}
Before we go on with the proof notice that by the very definition of the factorization functor $F$ associated with a $t$-structure above, we have that $\M_F$ coincides with the class of arrows $f$ such that the naturality square of $f$ with respect to the ``truncation'' functor $\tau_{<0}$ of the $t$-structure is cartesian: we denote this marking of $\C$ as $\tsc{Cart}(\tau_{<0})$ adopting the same notation as \cite[\S \textbf{4}]{RT}.
\end{remark}
The following lemma is the $t$-structure counterpart of Proposition \refbf{facto}.
\begin{lemma}
\label{another-pullout}The homotopy commutative sub-diagram 
\[
\xymatrix{
X_{\ge 0}\ar[r]\ar[d]_{\tau_{\ge 0}(f)} & X\ar[d]^{e_f} \\
Y_{\ge 0} \ar[r]& C 
}
\]
in the diagram ($\star\star$) is a pullout.
\end{lemma}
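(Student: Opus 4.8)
The plan is to realise the square in the statement as the upper half of a vertically stacked pair of squares whose lower half and whose total rectangle are recognisably pullout squares, and then to cancel by the pasting law. First I would unwind the construction of $e_f$. Since the decorated square of $(\star\star)$ exhibits $C$ as the pullback $X_{<0}\times_{Y_{<0}}Y$, the arrow $e_f\colon X\to C$ is the one induced by the cone over $X_{<0}\to Y_{<0}\leftarrow Y$ consisting of $\tau_{<0}(X)\colon X\to X_{<0}$ and $f\colon X\to Y$ (together with the naturality homotopy of $\tau_{<0}$ at $f$); in particular, writing $p\colon C\to X_{<0}$ for the projection, one has $p\circ e_f\simeq\tau_{<0}(X)$. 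Next I would observe that the middle row $Y_{\ge 0}\to C\xto{p}X_{<0}$ of $(\star\star)$ is a fiber sequence: the pullout $C\cong X_{<0}\times_{Y_{<0}}Y$ displays $p$ as a base change of $\tau_{<0}(Y)\colon Y\to Y_{<0}$ along $\tau_{<0}(f)$, and fibers are stable under base change, so $\fib(p)\simeq\fib(\tau_{<0}(Y))=Y_{\ge 0}$ compatibly with the arrows of $(\star\star)$ (this is also exactly the content of the remark, recorded just before the lemma, that all three rows of $(\star\star)$ are fiber sequences). Equivalently, the square
\[
\xymatrix{Y_{\ge 0}\ar[r]\ar[d] & C\ar[d]^{p}\\ 0\ar[r] & X_{<0}}
\]
is a pullout.

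Now stack the square of the statement on top of this pullout square. Its outer rectangle has left edge the terminal arrow $X_{\ge 0}\to 0$ (the composite $X_{\ge 0}\xto{\tau_{\ge 0}(f)}Y_{\ge 0}\to 0$), right edge $p\circ e_f\simeq\tau_{<0}(X)$, and top edge the fiber-sequence inclusion $X_{\ge 0}\to X$; hence the outer rectangle is precisely the pullout square encoding the fiber sequence $X_{\ge 0}\to X\to X_{<0}$ of Definition \refbf{tistru}.(iii), and so it is a pullout by the pullout axiom. Thus in the stacked diagram both the lower square and the total rectangle are pullbacks, so by the \tsc{r32}-property (pasting law) of pullback squares recalled in Section \textbf{3}, the upper square --- that is, the square in the statement --- is a pullback, hence a pullout since $\C$ is stable. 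As the notation already suggests, this lemma is the $t$-structure avatar of the base-change description of $\fF$-factorizations in Proposition \refbf{facto}, and I would flag that parallel at the end.

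The only point requiring some care is the map bookkeeping in the last step: one must check that the outer rectangle is the fiber-sequence pullout of $X$ \emph{on the nose} --- i.e. that its right edge is genuinely $\tau_{<0}(X)$ and its top edge the canonical inclusion $X_{\ge 0}\to X$ --- rather than merely a square with the correct vertices, since otherwise the comparison with Definition \refbf{tistru}.(iii) would not licence the conclusion that it is a pullout. Both identifications are forced by the universal property of the pullback defining $C$ through which $e_f$ was produced (they are exactly the defining homotopies $p\circ e_f\simeq\tau_{<0}(X)$ and $m_f\circ e_f\simeq f$), so I do not expect a genuine obstacle here; the whole argument is a two-square pasting together with the base-change stability of fibers.
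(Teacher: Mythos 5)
Your proof is correct and is essentially the paper's own argument: the paper forms the same composite diagram (the statement's square stacked over the square $Y_{\ge 0}\to C\to X_{<0}$ over $0$, next to the defining pullout for $C$) and ``applies the 2-for-3 law for pullouts twice'', which is exactly your combination of base-change stability of fibers (i.e.\ the middle row of $(\star\star)$ being a fiber sequence) with one vertical cancellation against the fiber sequence $X_{\ge 0}\to X\to X_{<0}$. Your extra bookkeeping that the outer rectangle's edges are genuinely $\tau_{<0}(X)$ and the canonical map $X_{\ge 0}\to X$ is a correct spelling-out of what the paper leaves implicit.
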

\begin{proof}
Consider the diagram
\[
\xymatrix{
X_{\geq 0}\ar[r]\ar[d]_{\tau_{\geq 0}(f)}&X\ar[d]^{e_f}\\
Y_{\ge 0} \ar[r]\ar[d]& C \pp \ar[d]\ar[r]^{m_f}& Y\ar[d]\\
0 \ar[r]& X_{<0}\ar[r]_{\tau_{<0}(f)} & Y_{<0}
}
\]
where all the squares are homotopy commutative and apply twice the 2-for-3 law for pullouts.
\end{proof}
\begin{lemma}
Let $F:f\mapsto(e_f,m_f)$ be the factorization functor associated with a $t$-structure by the diagram ($\star\star$). Then $\tau_{<0}(e_f)$ and $\tau_{\geq 0}(m_f)$ are equivalences. 
\end{lemma}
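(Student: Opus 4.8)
The plan is to read the statement off from Lemma \refbf{another-pullout} and the decorated pullout square of diagram $(\star\star)$, using only the behaviour of the truncation functors recorded in the Remark following Definition \refbf{tistru}: that $\tau_{<0}$ sends a pullout in $\C$ to a pushout in $\C_{<0}$, that $\tau_{\ge0}$ sends a pullout in $\C$ to a pullback in $\C_{\ge0}$, and that $\tau_{<0}$ vanishes on $\C_{\ge0}$ while $\tau_{\ge0}$ vanishes on $\C_{<0}$ (the last being immediate from the fiber sequence $\tau_{\ge0}A\to A\to\tau_{<0}A$ together with idempotency of the truncations). Because the inclusions $\C_{<0}\hookrightarrow\C$ and $\C_{\ge0}\hookrightarrow\C$ are fully faithful, it is immaterial whether "equivalence" is tested in the subcategory or in $\C$.

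First I would handle $e_f$. Lemma \refbf{another-pullout} says that the square with vertices $X_{\ge0},X,Y_{\ge0},C$, left edge $\tau_{\ge0}(f)$ and right edge $e_f$, is a pullout; applying $\tau_{<0}$ therefore produces a pushout square in $\C_{<0}$. Since $X_{\ge0}=\tau_{\ge0}X$ and $Y_{\ge0}=\tau_{\ge0}Y$ both lie in $\C_{\ge0}$, the left-hand column of this square becomes $0\to0$, so the square takes the form
\[
\xymatrix{
0\ar[r]\ar[d] & \tau_{<0}X\ar[d]^{\tau_{<0}(e_f)}\\
0\ar[r] & \tau_{<0}C
}
\]
and $\tau_{<0}(e_f)$, being the cobase change of the equivalence $0\to0$, is itself an equivalence. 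Hence $e_f\in\E(\tee)$.

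Dually, for $m_f$ I would use the decorated square of $(\star\star)$, which exhibits $C\simeq X_{<0}\times_{Y_{<0}}Y$ with $m_f$ the projection to $Y$ and is a pullout; applying $\tau_{\ge0}$ produces a pullback square in $\C_{\ge0}$, and since $X_{<0}$ and $Y_{<0}$ lie in $\C_{<0}$ this square takes the form
\[
\xymatrix{
\tau_{\ge0}C\ar[r]\ar[d]_{\tau_{\ge0}(m_f)} & 0\ar[d]\\
Y_{\ge0}\ar[r] & 0
}
\]
so that $\tau_{\ge0}(m_f)$, being the base change of the equivalence $0\to0$, is an equivalence. Hence $m_f\in\M(\tee)$, which would finish the argument.

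The proof is essentially bookkeeping and I do not anticipate a genuine obstacle; the one point that deserves a word of caution is that $\tau_{<0}$ and $\tau_{\ge0}$ are (co)continuous only as functors into their respective subcategories, so the truncated squares are (co)limit squares there and not a priori in $\C$. This is harmless, because a cobase change of an equivalence (respectively a base change of one) is again an equivalence in any $\infty$-category, and equivalences are both preserved and reflected by the fully faithful inclusions $\C_{<0}\hookrightarrow\C$ and $\C_{\ge0}\hookrightarrow\C$.
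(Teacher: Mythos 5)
Your argument is correct and is essentially the paper's own proof: apply $\tau_{<0}$ to the pullout of Lemma \refbf{another-pullout} (using $\tau_{<0}\tau_{\ge 0}=0$) to get a pushout in $\C_{<0}$ with a zero column, forcing $\tau_{<0}(e_f)$ to be an equivalence, and dually apply $\tau_{\ge 0}$ to the marked pullout in $(\star\star)$ for $m_f$. The extra remarks about fully faithful inclusions and co/base change of equivalences are sound bookkeeping but do not change the route.
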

\begin{proof}
Since $\tau_{<0}\tau_{\ge 0}=0$, by applying $\tau_{<0}$ to the pullout diagram in $\C$ given by lemma \refbf{another-pullout}, we get the pushout diagram
\[
\xymatrix{
0\po\ar[r]\ar[d] & X_{<0}\ar[d]^{\tau_{<0}(e_f)} \\
0 \ar[r]& C_{<0} 
}
\]
in $\C_{<0}$ which tells us that $\tau_{<0}(e_f)$ is an equivalence. The proof that $\tau_{\ge 0}(m_f)$ is a equivalence is perfectly dual and is obtained by applying $\tau_{\ge 0}$ to the marked pullout diagram in ($\star\star$).
 \end{proof}
It is now rather obvious that showing that
\[
\E_F = \tau_{<0}^{-1}(\iso); \qquad \M_F = \tau_{\geq 0}^{-1}(\iso)
\]
will imply that $F$ is an Eilenberg-Moore factorization functor. 
Once proved this, it is obvious that the preimage of a 3-for-2 class along a functor is again a 3-for-class in $\C$, and this entails that both classes in $\fF(\tee)$ are 3-for-2. We are now ready to prove
\begin{proposition}
The pair of markings $\fF(\tee)$ is a factorization system on the quasicategory $\C$, in the sense of Definition \refbf{def:effe-esse}.
\end{proposition}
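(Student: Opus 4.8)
The plan is to close the argument exactly where the exposition has left it: it remains only to identify the classes $\E_F$ and $\M_F$ carved out by the functorial factorization $F$ of the diagram $(\star\star)$ with $\E(\tee)=\tau_{<0}^{-1}(\iso)$ and $\M(\tee)=\tau_{\geq 0}^{-1}(\iso)$, and then to invoke the criterion of \cite{Korostenski199357} (see Remark \refbf{korostenski}), to the effect that an Eilenberg--Moore factorization functor determines a factorization system, in order to conclude. The engine is a single elementary fact about a stable $\infty$-category: a morphism is an equivalence precisely when its fibre is a zero object.

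First I would carry out two fibre computations. For any $f\colon X\to Y$, the decorated cartesian square of $(\star\star)$ --- the one with corners $C\cong X_{<0}\times_{Y_{<0}}Y$, $X_{<0}$, $Y$, $Y_{<0}$ --- shows, by pasting of pullbacks, that $\fib(m_f)\simeq\fib\big(\tau_{<0}(f)\big)$; hence ``$f\in\E_F$'', i.e. ``$m_f$ invertible'', is equivalent to $\fib(\tau_{<0}(f))\simeq 0$, i.e. to $\tau_{<0}(f)$ being an equivalence, so that $\E_F=\tau_{<0}^{-1}(\iso)=\E(\tee)$. Dually, Lemma \refbf{another-pullout} exhibits the square with corners $X_{\geq 0}$, $X$, $Y_{\geq 0}$, $C$ as a pullout, whence $\fib(e_f)\simeq\fib\big(\tau_{\geq 0}(f)\big)$, and therefore $\M_F=\tau_{\geq 0}^{-1}(\iso)=\M(\tee)$ by the same reasoning. (This is in line with the fact that the pair of truncations $\{\tau_{\geq 0},\tau_{<0}\}$ is jointly conservative on $\C$.) With these two identifications in hand, the previous Lemma --- which states that $\tau_{<0}(e_f)$ and $\tau_{\geq 0}(m_f)$ are equivalences for every $f$ --- reads precisely as ``$e_f\in\E_F$ and $m_f\in\M_F$ for all $f$''; thus $F$ is an Eilenberg--Moore factorization functor, and by \cite{Korostenski199357} the pair $(\E_F,\M_F)$ is a factorization system on $\C$ in the sense of Definition \refbf{def:effe-esse}. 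Since $(\E_F,\M_F)=(\E(\tee),\M(\tee))=\fF(\tee)$, this is the desired statement.

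I do not expect a serious obstacle: no new geometric input is needed, all of it having been spent already on the construction of $(\star\star)$, on Lemma \refbf{another-pullout}, and on the computation of $\tau_{<0}(e_f)$ and $\tau_{\geq 0}(m_f)$. The only delicate points are bookkeeping ones: (i) checking that the two pullout squares genuinely compute the fibres of $m_f$ and $e_f$, a mechanical pasting-of-pullbacks manipulation once one recalls that the fibre of a morphism is its pullback along the zero object --- the only thing to watch being the orientation of each square; (ii) unwinding correctly what membership in $\E_F$ and $\M_F$ means, so that the hypotheses required by \cite{Korostenski199357} are matched word for word by the previous Lemma; and (iii) the elementary stable fact invoked in (i). Closure of $\E_F$ and $\M_F$ under composition and under equivalences in $\C^{\simplex{1}}$ needs no separate verification, being part of the output of the Korostenski--Tholen formalism.
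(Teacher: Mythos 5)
Your proposal is correct, and it follows the paper's overall architecture --- identify $\E_F$ and $\M_F$ with $\tau_{<0}^{-1}(\iso)$ and $\tau_{\geq 0}^{-1}(\iso)$, observe that the preceding Lemma then says exactly that $e_f\in\E_F$ and $m_f\in\M_F$, and close with the Eilenberg--Moore criterion of Korostenski--Tholen (Remark \refbf{korostenski}) --- but the way you establish the second identification is genuinely different from the paper's. The paper first records that $\M_F=\tsc{Cart}(\tau_{<0})$, then proves $\tsc{Cart}(\tau_{<0})=\tau_{\geq 0}^{-1}(\iso)$ in two asymmetric steps: one direction uses that $\tau_{\geq 0}$ carries pullouts of $\C$ to pullbacks of $\C_{\geq 0}$, the other invokes the stable analogue of the triangulated five lemma applied to the three fiber sequences of $(\star\star)$. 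You instead treat both classes symmetrically by a single mechanism: pasting of pullbacks applied to the decorated square of $(\star\star)$ gives $\fib(m_f)\simeq\fib(\tau_{<0}(f))$, and applied to the pullout of Lemma \refbf{another-pullout} gives $\fib(e_f)\simeq\fib(\tau_{\geq 0}(f))$, after which the stable fact that a morphism is invertible iff its fiber vanishes yields both $\E_F=\tau_{<0}^{-1}(\iso)$ and $\M_F=\tau_{\geq 0}^{-1}(\iso)$ at once. What your route buys is economy and self-containedness: no appeal to the five lemma (hence no external citation to Neeman) and a visibly dual treatment of the two classes, with Lemma \refbf{another-pullout} used directly rather than only through the preceding Lemma. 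What the paper's route buys is the explicit identification $\M_F=\tsc{Cart}(\tau_{<0})$, which ties the construction to the notation of Rosick\'y--Tholen, makes the ``simplicity'' of the factorization visible, and sets up the dual remark ($\tsc{Cocart}(\tau_{\geq 0})$) that follows the proof. The only point you should spell out, since everything rests on it, is the fiber-detection criterion itself; it is immediate from the pullout axiom (if $\fib(f)\simeq 0$, the fiber square is also a pushout, exhibiting $f$ as a cobase change of an identity), so this is a one-line addition rather than a gap.
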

\begin{proof}
By the very definition of the factorization procedure, and invoking the pullout axiom, we can deduce that 
the arrow $f$ lies in $\E_F$ if and only if it is inverted by $\tau_{<0}$; this entails that $\E_F = \tau_{<0}^{-1}(\iso)$. So it remains to show that $\M_F = \tau_{\ge 0}^{-1}(\iso)$. We have already remarked that $\M_F=\tsc{Cart}(\tau_{<0})$, so we are reduced to showing that $ \tau_{\ge 0}^{-1}(\iso)=\tsc{Cart}(\tau_{<0})$. But again, this is easy because on the one side, if $f\in \tsc{Cart}(\tau_{<0})$ then the square
\[
\xymatrix{
\ar[r]\ar[d]_{\tau_{\ge 0}(f)} \pp & \ar[d]^{\tau_{\ge 0}\tau_{<0}(f)}\\
\ar[r] &
}
\]
is a pullout since $\tau_{\geq_0}$ preserves 
pullouts, and yet $\tau_{\ge 0}\tau_<0(f)$ is the identity of the zero object. So $\tau_{\ge 0}(f)$ must be an equivalence. On the other hand, the stable $\infty$-categorical analogue of the triangulated 5-lemma (see \cite[Prop. \textbf{1.1.20}]{Nee}), applied to the diagram ($\star\star$) shows that if $\tau_{\ge 0}(f)$ is an equivalence then $e_f$ is an equivalence and so $C\cong X$, i.e., $f\in \tsc{Cart}(\tau_{<0})$.
\end{proof}
\begin{remark}
As a side remark, we notice that a completely dual proof would have arisen using $C=Y_{\geq 0}\amalg_{X_{\ge 0}}X$ (see Lemma \refbf{another-pullout}) and then showing first that $\fF(\tee)$ is the factorization system $(\tsc{Cocart}(\tau_{\ge 0}), \tau_{\ge 0}^{-1}(\iso))$ and then $\tsc{Cocart}(\tau_{\ge 0})=\tau_{<0}^{-1}(\iso)$. 
\end{remark}
To check that $\fF(\tee)$ is normal, it only remains to verify that any of the equivalent conditions for normality given in Proposition \refbf{equivcondnorm} holds, which is immediate.
This concludes the proof that there is a correspondence between normal torsion theories and $t$-structures: it remains to show that this correspondence is bijective, i.e., that the following proposition holds.
\begin{proposition}
In the notations above, we have $\fF(\tee(\fF))=\fF$ and $\tee(\fF(\tee))=\tee$.
\end{proposition}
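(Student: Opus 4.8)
The plan is to verify the two equalities $\fF(\tee(\fF))=\fF$ and $\tee(\fF(\tee))=\tee$ separately, exploiting the explicit descriptions of both correspondences that were set up above. Recall that $\tee(\fF)=(0/\E,\,\M/0)$, while $\fF(\tee)=(\E(\tee),\M(\tee))$ with $\E(\tee)=\tau_{<0}^{-1}(\iso)$ and $\M(\tee)=\tau_{\ge 0}^{-1}(\iso)$; and that we have already proved $\E_F=\tau_{<0}^{-1}(\iso)$ and $\M_F=\tau_{\ge 0}^{-1}(\iso)$, where $F$ is the factorization functor of the diagram $(\star\star)$ built from $\tee$. Since in both cases the two classes of a factorization system determine each other, it suffices to check equality of, say, the $\M$-classes (or equivalently of the associated reflective subcategories $\M/0$).

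For the equality $\tee(\fF(\tee))=\tee$, I would start from a $t$-structure $\tee=(\C_{\ge0},\C_{<0})$, form $\fF(\tee)$, and then compute its torsion and torsion-free classes. By definition $\C_{<0}(\fF(\tee))=\M(\tee)/0=\{B\mid (B\to 0)\in\tau_{\ge 0}^{-1}(\iso)\}$; but $(B\to 0)$ is inverted by $\tau_{\ge 0}$ exactly when $\tau_{\ge 0}B\simeq 0$, and by the standard orthogonality/adjunction properties of a $t$-structure this holds precisely when $B\in\C_{<0}$. Dually $\C_{\ge 0}(\fF(\tee))=0/\E(\tee)=\{A\mid \tau_{<0}A\simeq 0\}=\C_{\ge 0}$. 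Here I would invoke Definition \refbf{tistru} together with the adjunctions $\tau_{\ge 0}\dashv(\text{inclusion})$ and $(\text{inclusion})\dashv\tau_{<0}$ recalled in the remark after it; no new work is needed beyond unwinding definitions. (One should also note that, since the reflection $R$ attached to $\fF(\tee)$ via the fundamental connection is by construction $\tau_{<0}$, the two $t$-structures literally have the same truncation functors.)

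For the equality $\fF(\tee(\fF))=\fF$, I would start from a normal torsion theory $\fF=(\E,\M)$, pass to $\tee(\fF)=(0/\E,\M/0)$ with reflection $R$ and coreflection $S$, and then build $\fF(\tee(\fF))$ via the $(\star\star)$-construction. The point is to identify the truncation functors of $\tee(\fF)$: by Proposition \refbf{refective} and its dual, $\tau_{<0}$ for this $t$-structure is exactly the reflection $R$ of $\fF$, and $\tau_{\ge 0}$ is the coreflection, i.e.\ $X\mapsto KX$ (using normality, Proposition \refbf{equivcondnorm}(5), $SX=KX$). Therefore $\M(\tee(\fF))=\tau_{\ge 0}^{-1}(\iso)$ consists of those $f$ with $Kf$ (equivalently $Sf$) invertible, which by the dual of Lemma \refbf{orthoreflex} (the lemma characterizing $\M=S^{-1}(\iso)$ for $\M$ a 3-for-2 class) is exactly $\M$; dually $\E(\tee(\fF))=\tau_{<0}^{-1}(\iso)=R^{-1}(\iso)=\E$ by Lemma \refbf{orthoreflex} itself. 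The key input is thus the already-established fact that for a torsion theory the classes $\E$, $\M$ are recovered as $R^{-1}(\iso)$, $S^{-1}(\iso)$, combined with the identification of the $t$-structure truncations with $R$ and $S$.

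The main obstacle I expect is bookkeeping rather than conceptual: one must be careful that the reflection/coreflection functors produced by the fundamental connection from $\fF$ genuinely coincide (not merely agree on objects) with the truncation functors $\tau_{<0},\tau_{\ge 0}$ of $\tee(\fF)$, and that the factorization functor $F$ of $(\star\star)$ reproduces the $\fF$-factorization $X\to RX\times_{RY}Y\to Y$ — this last point being exactly the content of the remark following Proposition \refbf{simplenormalexact}, that for a normal torsion theory the canonical factorization is the simple one. Once these identifications are in place (all of which follow from results already proved, chiefly Propositions \refbf{refective}, \refbf{simplenormalexact}, \refbf{equivcondnorm} and Lemma \refbf{orthoreflex}), both equalities drop out by comparing the defining subcategories, and uniqueness of the complementary class in a factorization system finishes the argument.
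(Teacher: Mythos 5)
Your proposal is correct and takes essentially the same route as the paper: both equalities are verified by unwinding the two constructions, identifying the truncation functors of $\tee(\fF)$ with the reflection $R$ and coreflection $S$ supplied by the fundamental connection, and using that $\E=R^{-1}(\iso)$, $\M=S^{-1}(\iso)$ for a torsion theory (resp.\ that $X_{<0}\simeq 0$ iff $X\in\C_{\ge 0}$, which is Lemma \refbf{orthoreflex}). The only cosmetic difference is that where you cite the lemma $\M=S^{-1}(\iso)$ and its dual, the paper re-derives $\E=\tau_{<0}^{-1}(\iso)$ on the spot by an explicit lifting-problem and 3-for-2 argument.
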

\begin{proof}
On the one side, consider the factorization system $\fF(\tee(\fF)) =(\tau_{<0}^{-1}(\iso)$, $\tau_{\ge 0}^{-1}(\iso))$, where the functor $\tau_{<0}$ is defined starting from the $\fF$-factorization of each $X\to 0$, as in the fundamental connection of \S\refbf{fundconn}: $X\xto{e}X_{<0}\xto{m}0$. Recall (Remark \refbf{funtoriali}) that the action of $\tau_{<0}\colon \C\to \M/0$ on arrows is obtained from a choice of solutions to lifting problems
\[
\xymatrix{
A \ar[r]^{e'f}\ar[d]_e & \tau_{<0}B\ar[d]^{m'} \\
\tau_{<0}A \ar[ur]_{\tau_{<0}(f)}\ar[r]_{m}& 0.
}
\]
It is now evident that $\tau_{<0}^{-1}(\iso) = \E$. Indeed:
\begin{itemize}
\item If $f\in \tau_{<0}^{-1}(\iso)$, then in the above square $e'f=\tau_{<0}(f)\, e$, which is in $\E$ since $\E$ contains equivalences and is closed for composition. But $e'$ lies in $\E$, so that $f\in\E$ by the 3-for-2 property of $\E$;
\item If $f\in \E$, then $e'f$ is in $\E$ and so in the same square we read two lifting problems with unique solutions, which implies that $\tau_{<0}(f)$ is invertible.
\end{itemize}
On the other side, we have to compare the $t$-structures $\tee = (\C_{\ge 0}, \C_{<0})$ and $\tee(\fF(\tee))$. We have $X\in \C_{\geq 0}(\fF(\tee))$ if and only if $\var{0}{X}\in \E(\tee)$. Since $\E(\tee) = \tau_{<0}^{-1}(\iso)$, we see that $X\in \C_{\ge 0}(\fF(\tee))$ if and only if  $X_{<0}\cong 0$. But it is a direct consequence of Lemma \refbf{orthoreflex} that $X_{<0}\cong 0$  if and only if $X\in \C_{\ge 0}$. Dually, one proves that $\C_{<0}(\fF(\tee))=\C_{<0}$.\end{proof}
\section{Selected exercises.}
\epigraph{\vbox{\lettrine{Q}{uinto} exercicio es meditaci\'on del Infierno.}\tiny{\,\,\,\,\hskip 5 pt.}}{\'I.~ L.~ de Loyola, \emph{Exercicios espirituales}}

The factorization systems point of view can be usefully employed to prove the stable $\infty$-categorical version of a few classical results on $t$-structures in triangulated categories, which appear to be missing a detailed discussion in \cite{LurieHA}. Here we propose these results in the form of exercises on which the reader can test the familiarity they have gained with the constructions presented in the main body of this note. A detailed discussion appears in \cite{heart}.
\begin{exercise}[The heart of a $t$-structure is abelian] The \emph{heart} of a $t$-structure $\tee=(\C_{\ge 0}, \C_{<0})$ on a stable $\infty$-category $\C$ is the full subcategory of $\C$ given by the intersection $\C^\heart=\C_{[0,1)}=\C_{\ge 0}\cap \C_{<1}$. Prove that $\C^\heart $ is an abelian $\infty$-category and so in particular its homotopy category is an abelian category (this was first proved in \cite[Thm. \textbf{1.3.6}]{BBDPervers} for triangulated categories, and is quoted without proof in \cite[Remark \textbf{1.2.1.12}]{LurieHA}).
\end{exercise}
\noindent{\it Hint:} Define the kernel of a morphism $f\colon X\to Y$ in $\C^\heart$ as $\ker(f)=(\mathrm{fib}(f))_{\geq 0}$ and the cokernel of $f$ as $\mathrm{coker}(f)=(\mathrm{cofib}(f))_{<1}$.
\begin{remark}\label{evocative}
There is a rather evocative pictorial representation of the heart of a $t$-structure, manifestly inspired by \cite{Brid}:
 if we depict $\C_{<0}$ and $\C_{\geq 0}$ as contiguous half-planes, like in the following picture,
\begin{center}
\begin{tikzpicture}[scale=.75]
\filldraw[gray!15] (5,-2) -- (-5,-2) -- (-5,2) -- (5, 2) -- cycle;
\filldraw[gray!40] (.5, -2) -- (0,-2) -- (0,2) -- (.5,2) -- cycle;
\draw[thick] (0,-2) -- (0,2);
\fill (2,1) circle (2pt) node[left] (X) {$X$};
\fill (-1,-1) circle (2pt) node[right] (Y) {$Y$};
\node at (4.5,-1.5) {$\C_{\geq0}$};
\node at (-4.5,-1.5) {$\C_{<0}$};
\draw[->] (2.2,1) -- (2.8,1);
\draw[->] (-1.2,-1) -- (-1.8,-1);
\fill[xshift=1cm] (2,1) circle (2pt) node[right] (X) {$X[1]$};
\fill[xshift=-1cm] (-1,-1) circle (2pt) node[left] (Y) {$Y[-1]$};
\draw (.25,0.5) circle (2pt) node[right] {$Z$};
\draw (-.75,0.5) circle (2pt) node[left] {$Z[-1]$};
\draw[->, red] (.05,0.6) to[bend right] (-.55,0.6);
\draw[->, xshift=-.5cm, yshift=-.5cm] (-5,-2) -- (6,-2) node[below, pos=.9] {\text{shift}};
\end{tikzpicture}
\end{center}
then the action of the shift is \emph{precisely} an horizontal shift, and the closure properties of the two classes $\C_{\geq0},\C_{<0}$ under positive and negative shifts are a direct consequence of their shape. With these notations, an object $Z$ is in the heart of $\tee$ if it lies in the ``shadowed region'', i.e. if it lies in $\C_{\geq0}$, but $Z[-1]$ lies in $\C_{<0}$.
\end{remark}

\begin{exercise}[Postnikov towers]\label{postnikov} Let $(\C,\tee)$ be a stable $\infty$-category endowed with a $t$-struc\-tu\-re. A morphism $f\colon X\to Y$ in $\C$ is said to be \emph{bounded} with respect to $\tee$ if there exist integers $a<b$ such that $\mathrm{fib}(f)\in\C_{[a,b)}=\C_{\geq a}\cap \C_{<b}$. Prove that, if $f$ is bounded then there exist a factorization of $f$
\[
X\simeq Z_0\xrightarrow{f_0}Z_{1}\xrightarrow{f_{1}}Z_{2}\xrightarrow{f_{2}}\cdots\xrightarrow{f_{b-a-1}}Z_{b-a}\simeq Y
\] 
with $\mathrm{fib}(f_k)\in \C^\heart[a+k]$. This factorization is called the Postnikov tower of $f$ and, for $f$ an initial morphism this is \cite[Prop. \textbf{1.3.13}]{BBDPervers} or \cite[Lemma \textbf{3.2}]{Brid}.
\end{exercise}
\noindent{\it Hint:} Use a shift to reduce to the case $a=0$ and then use induction on $b-a$.

\begin{exercise}[$t$-structures from Postnikov towers] Prove that the following converse of the result stated in Exercise \refbf{postnikov} holds. Let $\mathbf{H}$ be an abelian $\infty$-subcategory of the stable $\infty$-category $\C$. 
If for any morphism $f$ in $\C$ there exist integers $a<b$ and a functorial factorization  of $f$
\[
X\simeq Z_0\xrightarrow{f_0}Z_{1}\xrightarrow{f_{1}}Z_{2}\xrightarrow{f_{2}}\cdots\xrightarrow{f_{b-a-1}}Z_{b-a}\simeq Y
\] 
with $\mathrm{fib}(f_k)\in \mathbf{H}[a+k]$, then there exists a $t$-structure $\tee$ on $\C$ with $\C^\heart=\mathbf{H}$ and such that every morphism in $\C$ is $\tee$-bounded. This is \cite[Lemma \textbf{3.2}]{Brid}
\end{exercise}
\noindent{\it Hint:} Use the factorization $f\mapsto(f_0,f_1,\dots,f_{b-a-1})$ of an initial morphisms $0\to Y$ to decide which objects $Y$ should belong to the subcategory $\C_{\geq 0}$, and the factorization of terminal morphisms $X\to 0$ to decide which objects $X$ should belong to the subcategory $\C_{<0}$.
\medskip

Finally, we propose an exercise related to the notion of slicing in a stable $\infty$-category. 
\begin{exercise}
Recall from Remark \refbf{slicing} that a slicing on a stable $\infty$-category $\C$ is a collection $(\C_{\geq t},\C_{<t})_{t\in \mathbb{R}}$ of $t$-structures with:
\begin{itemize}
\item $\C_{<t_1}\subseteq \C_{<t_2}$ if $t_1\leq t_2$;
\item $\C_{<t+1}=\C_{<t}[1]$, for any $t\in \mathbb{R}.$
\end{itemize}
For any $\epsilon\in \mathbb{R}$ with $0<\epsilon<1$, let $\C_{[0,\epsilon)}=\C_{\geq0}\cap \C_{<\epsilon}$. Does $\C_{[0,\epsilon)}$ have kernels and cokernels? Is $\C_{[0,\epsilon)}$ an abelian $\infty$-category?
\end{exercise}
\noindent{\it Hint:} See \cite[Section 4]{Brid}.
          
\begin{acknowledgements} 
The authors would like to thank 
Paolo {Brasolin}, 
Sergio Buschi,
Olivia {Caramello}, 
Andr\'e Joyal, Urs Schreiber, 
all the $n$Labbers and \texttt{MathOverflow} members, and the referee,  
for stimulating discussions, crucial suggestions, insights, comments, encouragement and many other things that would be too long to enumerate here, and in particular Eric {Wofsey} for having provided one of the key-steps in the proof of our main theorem. F. L. would like to thank Emily {Riehl} and all the ``Kan extension seminar'' crew for having allowed him to learn thoroughly the theory of factorization systems.
\end{acknowledgements}
%
%
%
%
\providecommand{\bysame}{\leavevmode\hbox to3em{\hrulefill}\thinspace}
\providecommand{\MR}{\relax\ifhmode\unskip\space\fi MR }
\providecommand{\MRhref}[2]{%
  \href{http://www.ams.org/mathscinet-getitem?mr=#1}{#2}
}
\providecommand{\href}[2]{#2}

\hrulefill 
\end{document}